\xpatchcmd{\proof}{\itshape}{\bfseries}{}{}
\newtheorem{theorem}{Theorem}
\newtheorem{proposition}{Proposition}
\newtheorem{corollary}{Corollary}
\newtheorem{lemma}{Lemma}
\theoremstyle{remark}
\newtheorem{remark}{Remark}
\title{On the negatively pinched properties of  the disc bundles over negatively pinched K\"ahler manifolds}
\author{Yihong Hao}
\address[Yihong Hao]{Department of Mathematics, Northwest University, Xi'an \rm{710127}, China}
\email{haoyihong@126.com}
\author{Mingming Chen}
\address[Mingming Chen]{School of Mathematics and Statistics, Henan Normal University, Xinxiang \rm{453007}, China}
\email{chenmingming105@126.com}
\author{An Wang}
\address[An Wang]{School of Mathematical Sciences, Capital Normal University, Beijing \rm{100048}, China}
\email{wangan@cnu.edu.cn}
\subjclass[2020]{32Q05, 32Q20,  32L05, 32Q02}
\keywords{negative curvature manifold, K\"ahler-Einstein manifold, holomorphic vector bundle, Hartogs domain}
\date{\today}
\begin{document}
	
\begin{abstract}
In this paper, we study the relation between the existence of a negatively (holomorphically) pinched  K\"ahler metric on a complex manifold $M$
and its disc bundle contained in a Hermitian line bundle over $M$.
%In this paper, we provide some new negatively (holomorphically) pinched  K\"aher manifolds by Calabi ansatz.
\end{abstract}
\maketitle

%%%%%%%%%%%
\section{Introduction}
For a K\"ahler manifold $(M, g_{M})$, it  is called negatively $\delta$-pinched if there exist two positive real numbers, $A$ and $\delta$, such that
 $$-A\leq  \text{sectional \ curvature}  \leq -A\delta,$$ where  $0<\delta\leq 1$.
 It is called negatively $\delta$-holomorphically pinched (or $\delta$-bisectional pinched or $\delta$-Ricci pinched) if the sectional curvature is replaced by  holomorphic sectional curvature (or bisectional curvature or Ricci curvature). The constant $A$ in the inequalities is not essential, since we can always  normalize the metric by scaling.

The definition shows that a negatively $1$-pinched manifold is isometric to a real Hyperbolic space and negatively $1$-holomorphically pinched manifold is holomorphically isometric to a complex Hyperbolic space $\mathbb{CH}^{n}$ equipped with its standard metric.
A result proved independently by Hernandez \cite{Hernandez1991} and Yau and Zheng \cite{Yau1991} states that, if a compact K\"ahler
manifold $M$ is endowed with a  metric g that is negatively
$\frac{1}{4}$-pinched, then $(M, g)$ is isometric to a quotient of $\mathbb{CH}^n$.
On the other hand,
there also exist  some complex manifolds which does not admit a complete K\"ahler metric with negatively pinched  (holomorphic) sectional
curvature. Seshadri's result told us that a product of complex manifolds  cannot admit a complete K\"ahler metric with
sectional curvature $\kappa < c < 0$ and Ricci curvature $Ric > d$, where $c$ and $d$ are
constants \cite{Seshadri2006}. This implies that product domains in $\mathbb{C}^{n}$ do not admit complete
K\"ahler metrics with negatively pinched  sectional curvature.
Seshadri and Zheng \cite{Seshadri2008} also proved that the product of two complex manifolds
does not admit any complete K\"ahler metric whose bisectional curvature is pinched
between by two negative constants. For more detailed information about the topic, the readers are referred to the earlier  articles \cite{Mok1987,Seo2012,Shi1989,Yang1976,Zheng1994}.

From the work of Gromov and Thurston in \cite{Gromov1991}, one knows that there are
many  negatively $\delta$-pinched Riemannian manifolds.
However, up to 1992, there are few  examples on the complete negatively $\delta$-pinched  K\"ahler manifolds. At that  time, maybe all  known examples  had been listed by Cheung and Wu in \cite{Cheung1992}.
Along this line, methods such as direct computation,
relying on the inherent holomorphic symmetries, the deformations of the unit ball or ellipsoidal domains, the intersection of two complex ellipsoidal domains  arose.
In recently, Bakkacha \cite{BAKKACHA2024} provided  a new method to give more  complete K\"ahler manifolds with
negative sectional curvature. Actually, he   proved that a bounded domain in $\mathbb{C}^{n}$
admitting a complete K\"ahler
metric with negatively pinched holomorphic (bi)sectional curvature near
the boundary, admits a complete K\"ahler metric with negatively pinched
holomorphic (bi)sectional curvature everywhere. Hence, one can obtain  a complete K\"ahler
metric with negatively pinched sectional curvature by using the relation \eqref{ERelation}.

Due to the importance  of complete K\"ahler metrics with
negative curvature in  geometry,
we hope to provide a standard method to obtained a negatively pinched  K\"aher manifold from another one.
We primarily employ Calabi ansatz to study line bundles over  K\"ahler manifolds.
%Let $D(L^{*})$ be the disc bundle \eqref{D1} admitting a K\"ahler metric \eqref{omega D1}.

Let  $\pi:(L, h)\rightarrow M$ be a positive Hermitian line bundle over
 an $m$-dimensional  K\"ahler manifold $(M, g_{M})$ such that the K\"ahler form $\omega_{M}=-\sqrt{-1}\partial \bar\partial\log h$.
 Let $(L^{*}, h^{-1})\rightarrow M$ be the dual bundle of $L$.
 The disc bundle is defined by
 \begin{equation}\label{D1}
 D(L^{*}) := \{v \in L^{*} : |v|_{h^{-1}} < 1\},
 \end{equation}
where $|v|_{h^{-1}}$ is the norm of $v$ with respect to the metric $h^{-1}$, and we denote it by $x$ for brevity.
Let $u$ be a smooth real-valued  function on $[0,+\infty)$.
Then the following  $(1, 1)$-form
\begin{equation}\label{womega1}
\omega_{D}:=\pi^{*}(\omega_M)+\sqrt{-1}\partial\bar\partial u(|v|_{h^{-1}}^{2})
\end{equation}
is well defined on $L^*$. It is called Calabi ansatz.
By Lemma 1 in \cite{Calabi1979},
We know that
$
\omega_{D}
$
induces a K\"ahler metric in some neighbourhood of $M$  if and only  if  $u'(x)>0$ and $(xu(x)')'>0$ in $[0, 1)$.
In particular, we take $u=-\log(1-x)$. Thus we get a positive $(1,1)$-form
\begin{equation}\label{omega D1}
\omega_{D}:=\pi^{*}(\omega_M)-\sqrt{-1}\partial\bar\partial \log(1-|v|_{h^{-1}}^{2}).
\end{equation}
The respective metric is denoted by $g_{D}$.
Our main results are as follows.

\begin{theorem}
The K\"ahler manifold  $(D(L^{*}), g_{D})$ is negatively holomorphically pinched  if and only if the base manifold $(M, g_{M})$  is so.
Let $-A$ be the lower bound for the holomorphic sectional curvature of $g_{M}$. Denote by $\delta$ and $\delta'$ the pinched constants of $(M, g_{M})$ and $(D(L), g_{D})$ respectively.  Then $\delta'\geq \delta$ when $A\geq2$, and $\delta'< \delta$ when $0< A <2$.
\end{theorem}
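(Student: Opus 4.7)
The plan is to derive a closed-form expression for the holomorphic sectional curvature $H_D$ at an arbitrary point of $D(L^*)$ in terms of the fiber parameter $x$, the horizontal/vertical split of the tangent direction, and the corresponding base curvature $H_M$, and then to extract the pinching bounds by optimizing this expression. First I will rewrite $\omega_D$ in a local holomorphic frame $e$ of $L$ with fiber coordinate $\xi$ on $L^*$ and $h = |e|_h^2$; using $|v|_{h^{-1}}^2 = |\xi|^2/h$ locally, one gets
\begin{equation*}
\omega_D = -\sqrt{-1}\,\partial\bar\partial\log\bigl(h(z,\bar z) - |\xi|^2\bigr),
\end{equation*}
so that $\Phi(z,\xi) := -\log(h - |\xi|^2)$ is a local K\"ahler potential for $g_D$. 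This Hartogs-type form will be the working object throughout.

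Next, I fix a point $p_0 = (z_0, \xi_0) \in D(L^*)$ and exploit three symmetries: (i) the $U(1)$-action on fibers to arrange $\xi_0 \geq 0$; (ii) rescaling the holomorphic frame $e$ so that $h(z_0) = 1$, and hence $x := |\xi_0|^2 \in [0, 1)$; and (iii) Bochner normal coordinates for $g_M$ centered at $z_0 = 0$, in which $h = e^{-K}$ with $K = |z|^2 - \tfrac14 R^M_{i\bar j k\bar l}(0)\,z_i z_k \bar z_j \bar z_l + O(|z|^5)$. A direct computation then gives $g^D_{i\bar j}(p_0) = \delta_{ij}/(1-x)$, $g^D_{\xi\bar\xi}(p_0) = 1/(1-x)^2$, $g^D_{i\bar\xi}(p_0) = 0$, so the frame $E_i := \sqrt{1-x}\,\partial_i$ ($1 \leq i \leq m$) together with $E_{m+1} := (1-x)\,\partial_\xi$ is $g_D$-orthonormal at $p_0$.

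The main technical step will be expanding $\Phi$ to fourth order about $p_0$ and applying the K\"ahler curvature formula
\begin{equation*}
R^D_{A\bar B C\bar D} = -\partial_C\bar\partial_D g^D_{A\bar B} + g^{P\bar Q}\,\partial_C g^D_{A\bar Q}\,\bar\partial_D g^D_{P\bar B},
\end{equation*}
retaining the connection term since, once $x > 0$, the first derivatives of $g^D$ at $p_0$ no longer all vanish. After the dust settles, the nonvanishing orthonormal-frame components of $R^D$ come out as
\begin{align*}
R^D(E_i, \bar E_j, E_k, \bar E_l) &= (1-x)\,R^M_{i\bar j k\bar l}(z_0) - x\bigl(\delta_{ij}\delta_{kl} + \delta_{il}\delta_{kj}\bigr), \\
R^D(E_{m+1}, \bar E_{m+1}, E_{m+1}, \bar E_{m+1}) &= -2, \\
R^D(E_i, \bar E_j, E_{m+1}, \bar E_{m+1}) &= -\delta_{ij},
\end{align*}
with all remaining components either vanishing or determined by the K\"ahler symmetries. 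For a $g_D$-unit vector written as $V = \sqrt{1-\mu}\,U + \sqrt{\mu}\,E_{m+1}$ with $U$ a unit horizontal vector and $\mu \in [0,1]$, summing the six distribution patterns of horizontal and vertical labels across the four slots and simplifying collapses the answer to
\begin{equation*}
H_D(V) = (1-x)(1-\mu)^2\bigl[H_M(U) + 2\bigr] - 2.
\end{equation*}

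From this formula the ``iff'' statement of the theorem is immediate: at a zero-section point with $\mu = 0$ one has $H_D(V) = H_M(U)$, and conversely the formula shows that the bounds $H_M \in [-A, -A\delta]$ make $H_D$ uniformly negative and uniformly bounded. For the pinching comparison I will set $\nu := (1-x)(1-\mu)^2 \in [0, 1]$ and $\tau := H_M(U) + 2 \in [2-A,\,2-A\delta]$, so that the total range of $H_D$ over $D(L^*)$ equals $\{\nu\tau - 2 : \nu \in [0,1],\ \tau \in [2-A,\,2-A\delta]\}$, a one-variable optimization on a product interval. The main obstacle will be the bookkeeping in the curvature calculation of step three; the threshold $A = 2$ in the statement enters precisely because it marks the location of the pure-fiber curvature $-2$ relative to the base's lower bound $-A$, and the sign of $\tau$ is what drives the dichotomy between $\delta' \geq \delta$ and $\delta' < \delta$.
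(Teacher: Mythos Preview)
Your plan is correct and follows essentially the same route as the paper. The paper derives the identical holomorphic sectional curvature formula (its Proposition~1), written there as
\[
\Theta_D = -2 + \frac{1}{1-|v|^2}\,\frac{g_M^2(X,X)}{g_D^2(U,U)}\bigl(2+\Theta_M\bigr),
\]
which is exactly your $H_D(V) = (1-x)(1-\mu)^2\bigl[H_M(U)+2\bigr]-2$ after identifying $(1-x)(1-\mu)^2$ with the displayed ratio; the paper then performs the same $\nu\tau-2$ optimization and case split at $A=2$. The only cosmetic differences are that you record the individual orthonormal-frame components $R^D(E_A,\bar E_B,E_C,\bar E_D)$ before contracting, whereas the paper computes $\Theta_D$ as a single quartic form, and that for the necessity direction the paper invokes the Gauss equation for the K\"ahler submanifold $M\hookrightarrow D(L^*)$ while you read off $H_D=H_M$ directly at $x=0,\ \mu=0$ (both arguments amount to the same observation that the zero section is totally geodesic in the relevant sense).
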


\begin{theorem}
The K\"ahler manifold  $(D(L^{*}), g_{D})$  is negatively  pinched  if and only if $(M, g_{M})$  is.
Moreover, $\delta'\geq \frac{1}{4}\delta$ when $A\geq\frac{1}{2}$, and $\delta'< \frac{1}{4}\delta$ when $0< A <\frac{1}{2}$.
\end{theorem}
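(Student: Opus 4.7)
My plan is to reuse the explicit curvature formulas for $(D(L^*), g_D)$ derived in the proof of Theorem~1 and extract bounds on the real sectional curvature by testing all orthonormal 2-planes. Working in normal coordinates on $M$ around a point $p$ together with a local holomorphic trivialization of $L^*$, the K\"ahler potential of $g_D$ splits into a base contribution from $g_M$ and a fiber contribution from $-\log(1-x)$ with $x=|v|_{h^{-1}}^2$. Correspondingly, $T_{(p,v)}D(L^*)$ orthogonally decomposes into the horizontal lift of $T_pM$ and the one-complex-dimensional vertical fiber direction, and $R^D$ inherits a compatible block structure.

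Given an orthonormal real pair $\{X,Y\}$ spanning a 2-plane $\Pi \subset T_{(p,v)}D(L^*)$, I would write $X = X^H + X^V$, $Y = Y^H + Y^V$ and expand $K^D(\Pi) = R^D(X,Y,Y,X)$ by multilinearity. On any K\"ahler manifold the sectional curvature of $\Pi$ depends on the ``complex angle'' $\alpha = \langle X, JY \rangle$: for complex (i.e.\ $J$-invariant) planes $|\alpha|=1$ and $K$ reproduces the holomorphic sectional curvature, while for totally real planes $\alpha=0$ and $K$ shrinks by the well-known factor $1/4$ in the constant-HS-curvature model $\mathbb{CH}^n$. The horizontal block of $R^D$ reduces to quantities built from $R^M$, already pinched by hypothesis. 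The purely vertical fiber is a scaled Poincar\'e disc contributing a single complex-line curvature. Mixed horizontal-vertical 2-planes are automatically totally real, since the horizontal distribution is $J$-invariant and orthogonal to the vertical, and these together with totally real base planes are where the factor $1/4$ emerges.

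Optimizing $K^D(\Pi)$ over all $\Pi$ then yields the pinching constants of $g_D$. The most negative value is $-A$, attained along complex 2-planes and matching the extremum supplied by Theorem~1. The least negative value is the closer to zero of the base extremum $-A\delta$ and the totally-real extremum $-A/4$ inherited from the $\mathbb{CH}^1$-type fiber. When $A\delta \le A/4$, i.e.\ (using Theorem~1's relation) when $A\ge \tfrac12$, the base side dominates and one obtains $\delta'\ge \tfrac14\delta$; when $0<A<\tfrac12$ the totally-real fiber-type plane strictly exceeds the base extremum and $\delta'<\tfrac14\delta$.

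The main obstacle I expect is the mixed-plane estimate: the cross terms are quadrilinear in the horizontal and vertical components with coefficients involving $R^M$, $u'(x)$, and $u''(x)$, and ruling out a mixed plane that beats the two pure extrema requires a careful Cauchy--Schwarz argument combined with the positivity $u'>0,\ (xu')'>0$ underlying the Calabi ansatz and the already-pinched bisectional curvature of $g_M$. Pinning down exactly the sharp threshold $A=\tfrac12$, rather than a weaker or stronger constant, is where the most delicate case analysis lies.
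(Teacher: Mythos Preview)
Your proposal contains a genuine gap: the claimed extremal values of $\kappa_D$ are both wrong, and consequently so is your derivation of the threshold $A=\tfrac12$.

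First, the lower bound. You assert that the most negative sectional curvature of $g_D$ is $-A$, attained on a complex $2$-plane. That is the lower bound for the \emph{holomorphic} sectional curvature (Theorem~1), but the real sectional curvature goes lower. The paper's explicit formula (Proposition~\ref{sect}) reads
\[
\kappa_D(\mu,\nu)=-2+2\Bigl(-\kappa_\Omega(x,y)+\tfrac12\kappa_M(x,y)\Bigr)\frac{\|x\wedge y\|_M^2}{1-|v|^2},
\]
where $\kappa_\Omega$ is the sectional curvature of the local complex hyperbolic model and satisfies $-1\le\kappa_\Omega\le-\tfrac14$. Taking a totally real horizontal plane (so $\kappa_\Omega=-\tfrac14$) with $\kappa_M=c_1=-A$ and $\|x\wedge y\|_M^2/(1-|v|^2)\to1$ gives $\kappa_D\to -2+2(\tfrac14-\tfrac{A}{2})=-\tfrac32-A$, which for every $A>0$ is strictly below $-A$. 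The actual lower bound is $\min\{-2,\,-\tfrac32-A\}$, and it is precisely the switch between these two values at $A=\tfrac12$ that produces the threshold: for $A\ge\tfrac12$ one has $\delta'=\dfrac{\delta A}{A+\tfrac32}$, and $\dfrac{A}{A+3/2}\ge\tfrac14$ iff $A\ge\tfrac12$.

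Second, the upper bound and your threshold argument. There is no ``totally-real extremum $-A/4$ inherited from the $\mathbb{CH}^1$-type fiber'': the fiber is one complex dimension, so it contains no totally real $2$-plane, and a mixed plane with one real leg in the fiber has $x=0$, hence $\|x\wedge y\|_M^2=0$ and $\kappa_D=-2$ by the formula above. The correct upper bound is simply $\max\{-2,\,-A\delta\}$. Moreover, your sentence ``$A\delta\le A/4$, i.e.\ when $A\ge\tfrac12$'' is a non sequitur: $A\delta\le A/4$ says $\delta\le\tfrac14$ and has nothing to do with $A$.

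Methodologically, the paper avoids a raw multilinear expansion with Cauchy--Schwarz on cross terms. Instead it uses the polarization identity expressing $R(x,y,y,x)$ as a fixed linear combination of six values $Q(\cdot)=R(\cdot,\bar\cdot,\cdot,\bar\cdot)$, applies the explicit holomorphic-sectional formula of Proposition~\ref{h sect} to each, and recognizes the result as built from $\kappa_M$ and the model curvature $\kappa_\Omega$. This yields the closed formula above with no residual cross terms to estimate, so the ``delicate Cauchy--Schwarz argument'' you anticipate is unnecessary, and the sharp constants fall out directly from the elementary ranges of $\kappa_\Omega$ and of $\|x\wedge y\|_M^2/(1-|v|^2)$.
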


As we know, Bergman metric $g_B$, Carath\'eodory metric $g_C$, Kobayashi-Royden
metric $g_K$ and K\"ahler-Einstein metric $g_{KE}$ with negative scalar curvature are four classical invariant metrics in complex geometry.
It was proved by Wu and Yau \cite{Wu2020}
that any simply-connected complete negatively pinched K\"ahler manifold $(M, g_{M})$
has a complete Bergman metric $g_B$ and  $g_{M}$ is uniformly equivalent to
$g_B$. They also prove that any  complete negatively holomorphically pinched K\"ahler manifold $(M, g_{M})$
has a complete K\"ahler-Einstein metric $g_{KE}$ and the background K\"ahler metric $g_{M}$ is uniformly equivalent to  $g_{KE}$ and $g_K$.
It is easily to know that the holomorphic sectional curvature is dominated
by the sectional curvature \cite{Zheng2000}.  As a  result, $g_B$, $g_K$, $g_{KE}$ on
simply-connected complete negatively pinched K\"ahler manifold $(M, g_{M})$ are equivalent.
Thus, we have several corollaries directly.
\begin{corollary}
The disc bundle $(D(L^{*}), g_{D})$ over a complete negatively holomorphically pinched K\"ahler manifold  $(M, g_{M})$ has
a unique  complete K\"ahler-Einstein metric.
Moreover, the Kobayashi-Roydan metric and K\"ahler-Einstein metric are equivalent.
\end{corollary}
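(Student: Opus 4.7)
The strategy is a direct reduction to Theorem 1 combined with the Wu--Yau theorem cited in the introduction. The plan has three steps: transfer the pinching from $(M,g_M)$ up to $(D(L^{*}),g_D)$, check completeness so that Wu--Yau applies, and finally handle uniqueness.

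First, I would invoke Theorem 1 to conclude that $(D(L^{*}),g_D)$ is negatively holomorphically pinched whenever $(M,g_M)$ is. This is essentially free: the corollary's hypothesis is exactly what feeds Theorem 1. The next step is to verify that $g_D$ is a \emph{complete} K\"ahler metric on $D(L^{*})$. Completeness in the fiber direction should follow from the fact that the potential $-\log(1-x)$ in \eqref{omega D1} produces a Poincar\'e-type blow-up as $x = |v|_{h^{-1}}^{2} \to 1^{-}$, so the fiber $g_D$-distance to $\partial D(L^{*})$ is infinite; completeness in the base direction is inherited from the assumed completeness of $(M,g_M)$ via the horizontal component $\pi^{*}\omega_M$. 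A standard splitting-of-tangent-vectors argument then shows that any divergent sequence in $D(L^{*})$ has $g_D$-length going to infinity.

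With these two ingredients, the Wu--Yau theorem \cite{Wu2020} applies directly to $(D(L^{*}),g_D)$: it produces a complete K\"ahler--Einstein metric $g_{KE}$ on $D(L^{*})$ which is uniformly equivalent to the background metric $g_D$ and to the Kobayashi--Royden metric $g_K$. In particular, by transitivity of uniform equivalence, $g_{KE}$ and $g_K$ are equivalent, which is the second claim.

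The remaining point is \emph{uniqueness} of the complete K\"ahler--Einstein metric, and I expect this to be the main (though still routine) obstacle, since $D(L^{*})$ is non-compact. The plan is to use Yau's Schwarz lemma: if $g_{KE}$ and $g'_{KE}$ are two complete K\"ahler--Einstein metrics on $D(L^{*})$ with the same negative Einstein constant, then because both have Ricci curvature bounded below and the identity map is holomorphic, Yau's Schwarz lemma yields $g_{KE} \leq g'_{KE}$ and $g'_{KE}\leq g_{KE}$, hence equality. The pinching hypothesis, propagated through Theorem~1, ensures the curvature bounds required for Yau's lemma to be applicable on both sides, so uniqueness follows cleanly. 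This completes the corollary.
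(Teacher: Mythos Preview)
Your proposal is correct and matches the paper's approach exactly: the paper treats this as a direct consequence of Theorem~\ref{Thm1} (transferring the holomorphic pinching to $(D(L^{*}),g_D)$) together with the Wu--Yau theorem \cite{Wu2020}, with completeness of $g_D$ already supplied by Lemma~2. Your added details on completeness and on uniqueness via Yau's Schwarz lemma are consistent with, but more explicit than, what the paper records.
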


\begin{corollary}
Let $(M, g_{M})$ be a complete negatively pinched K\"ahler manifold.
If the disc bundle $(D(L^*), g_{D})$ over $(M, g_{M})$  is  simple-connected,  then there exists a complete  Bergman metric on $D(L^*)$.
Moreover, $g_{B}$, $g_{KE}$, $g_{K}$ and $g_{D}$ are all equivalent.
\end{corollary}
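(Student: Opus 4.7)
The plan is to chain together Theorem 2, the standard completeness of the Calabi ansatz along the fibres, and the two Wu--Yau equivalence theorems quoted in the paragraph preceding the statement.

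First I would apply Theorem 2 in the forward direction: the hypothesis that $(M, g_{M})$ is complete and negatively pinched gives, via Theorem 2, that $(D(L^{*}), g_{D})$ is also negatively pinched, say with constant $\delta'$ controlled explicitly by $\delta$ and $A$. Before invoking Wu--Yau, I need to verify that $g_{D}$ is itself a \emph{complete} K\"ahler metric on $D(L^{*})$. Completeness in horizontal directions is inherited from $g_{M}$ via $\pi^{*}(\omega_{M})$; completeness in the fibre direction follows because the potential $-\log(1-x)$ blows up as $x = |v|_{h^{-1}}^{2} \to 1^{-}$, so the fibre length of any curve approaching the boundary of the disc bundle diverges. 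These two facts together imply that $(D(L^{*}), g_{D})$ is geodesically complete.

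Next I would invoke the Wu--Yau theorem from \cite{Wu2020} cited just above the statement: any \emph{simply-connected} complete negatively pinched K\"ahler manifold admits a complete Bergman metric $g_{B}$ that is uniformly equivalent to the background metric. Applied to $(D(L^{*}), g_{D})$, which by hypothesis is simply connected and by the previous paragraph is complete and negatively pinched, this produces a complete Bergman metric $g_{B}$ on $D(L^{*})$ with $g_{B} \sim g_{D}$. Because the holomorphic sectional curvature is dominated by the sectional curvature, $(D(L^{*}), g_{D})$ is also negatively holomorphically pinched; the second Wu--Yau statement then supplies a complete K\"ahler--Einstein metric $g_{KE}$ with $g_{KE} \sim g_{D}$ and $g_{K} \sim g_{D}$. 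Transitivity of the equivalence relation yields $g_{B} \sim g_{KE} \sim g_{K} \sim g_{D}$, which is exactly the conclusion.

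The main obstacle I anticipate is not any deep estimate but rather the careful verification of completeness of $g_{D}$ in the fibre direction, since this is where one must use the specific form of the potential $u(x) = -\log(1-x)$ in \eqref{omega D1}; the other ingredients are immediate once Theorem 2 and the Wu--Yau theorems are in hand. If completeness were in doubt, one could instead argue by estimating the fibre distance from a point at $x = x_{0}$ to a point at $x = 1-\varepsilon$ using the explicit form of $\omega_{D}$ and showing that this distance diverges as $\varepsilon \to 0^{+}$.
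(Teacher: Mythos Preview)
Your proposal is correct and follows the same route as the paper: apply Theorem~2 to transfer the negative pinching to $(D(L^{*}),g_{D})$, then feed the resulting simply-connected complete negatively pinched K\"ahler manifold into the two Wu--Yau theorems from \cite{Wu2020}. The only streamlining available is that you need not re-argue completeness of $g_{D}$---the paper establishes this separately as Lemma~2, so you may simply cite it.
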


Conversely,
suppose that $A > 0$ and $M$ is  a  negatively $\delta$-holomorphically pinched  K\"ahler
manifold with  $\delta > \frac{2}{3}$, i.e. the holomorphic sectional curvature $\Theta$  satisfies $-A \leq \Theta \leq - \delta A$. Then all sectional curvatures satisfy
\begin{equation}\label{ERelation}
 -A \leq \kappa \leq -\frac{1}{4}(3\delta - 2)A<0,
\end{equation}
(see \cite{Kim2011}, \cite{Berger1967} or
\cite{Kobayashi1963} vol. II, note 23, p. 369.)
This induces that, for $\delta > \frac{2}{3}$,
 $g_{B}$, $g_{K}$, $g_{KE}$ on
simply-connected complete  $\delta$-negatively holomorphically pinched K\"ahler manifold $(M, \omega)$  are equivalent.

\begin{corollary}
 Let $D(L^*)$ be a  simple-connected disc bundle over negatively $\delta$-holomorphically pinched K\"ahler manifold $(M, \omega)$.
  If $\delta > \frac{2}{3}$, then there exists  a complete  Bergman metric and a unique complete K\"ahler-Einstein metric on $D(L^*)$.
Moreover, $g_{B}$, $g_{K}$, $g_{KE}$ are all equivalent.
\end{corollary}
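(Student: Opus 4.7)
The plan is to reduce Corollary 3 to Corollary 2 by means of the pointwise inequality \eqref{ERelation}. Since the base $(M,\omega)$ is negatively $\delta$-holomorphically pinched with $\delta>2/3$, the inequality \eqref{ERelation} gives, for every sectional curvature $\kappa$ of $M$,
\[
-A\leq\kappa\leq -\tfrac{1}{4}(3\delta-2)A<0,
\]
so $(M,\omega)$ is a complete negatively pinched K\"ahler manifold in the \emph{sectional curvature} sense.

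Applying Theorem 2, one obtains that $(D(L^*),g_D)$ is itself negatively pinched (sectional curvature), with some pinching constant $\delta'>0$. Moreover, the Calabi ansatz choice $u(x)=-\log(1-x)$ makes the radial fiber length blow up as $|v|_{h^{-1}}\to 1^-$; combined with completeness of the base, this gives completeness of $(D(L^*),g_D)$. Hence $D(L^*)$ is a simply-connected, complete K\"ahler manifold whose sectional curvature is pinched between two negative constants.

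By Wu--Yau \cite{Wu2020}, a simply-connected complete negatively pinched K\"ahler manifold admits a complete Bergman metric $g_B$ uniformly equivalent to the background metric; applying this to $(D(L^*),g_D)$ produces $g_B$ with $g_B\simeq g_D$. Since the holomorphic sectional curvature is dominated by the sectional curvature, $D(L^*)$ is also negatively holomorphically pinched, so the remaining Wu--Yau statement yields a unique complete K\"ahler-Einstein metric $g_{KE}$ uniformly equivalent to both $g_D$ and the Kobayashi--Royden metric $g_K$. Chaining these equivalences gives that $g_B$, $g_{KE}$, $g_K$, and $g_D$ are all mutually equivalent, which is the claim.

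The expected main obstacle is the completeness check for $g_D$ on the full disc bundle, since Theorem 2 only controls curvature and not completeness directly; one must verify it along radial fiber rays (where $-\log(1-x)$ diverges) and along arbitrary curves approaching $\partial D(L^*)$ or escaping to infinity in the base, then combine. Assembling the two distinct Wu--Yau conclusions (the Bergman statement and the K\"ahler-Einstein/Kobayashi--Royden statement) into a single chain of equivalences is also a technical point but not a deep one. Uniqueness of $g_{KE}$ follows from the standard maximum principle argument for the underlying complex Monge-Amp\`ere equation used in the Wu--Yau framework.
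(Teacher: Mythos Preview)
Your proposal is correct and follows essentially the same route the paper has in mind: the paper places Corollary~3 immediately after the inequality~\eqref{ERelation} and the discussion reducing $\delta>\tfrac{2}{3}$ holomorphic pinching to sectional pinching, so the intended argument is precisely to feed this into Corollary~2 (equivalently, Theorem~2 together with Lemma~2 for completeness and the Wu--Yau results), exactly as you outline. The completeness of $g_D$ that you flag as the main obstacle is already handled by Lemma~2, whose proof matches your sketch (base completeness via $g_D\geq\pi^*g_M$ plus divergence of $-\log(1-|v|_{h^{-1}}^2)$ toward the fiber boundary).
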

%%%%%%%%%%%%%%%%%%%%%%%%%%
\section{The geometry of the disc bundle}
In this section, we will study the completeness, the Ricci curvature and the (holomorphic) sectional curvature of the K\"ahler manifold $(D(L^{*}), g_{D})$ in \eqref{D1}.
\begin{lemma}
On the disc bundle $D(L^{*})$ over $M$, we have that
the metric $g_{D}\geq\pi^{*}(g_M)$.
\end{lemma}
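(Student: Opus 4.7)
The plan is to write $\omega_D-\pi^*\omega_M=\sqrt{-1}\,\partial\bar\partial u(x)$ with $u(x)=-\log(1-x)$ as a sum of two manifestly nonnegative $(1,1)$-forms by splitting into horizontal (base) and fiber directions. The key identity is that, away from the zero section, $x=|v|_{h^{-1}}^2$ satisfies
\[
\sqrt{-1}\,\partial\bar\partial\log x=\pi^*\omega_M.
\]
Indeed, in a local holomorphic trivialization of $L^*$ with fiber coordinate $\xi$ one has $x=|\xi|^2/h(z)$, so $\log x=\log|\xi|^2-\log h(z)$; the first term is pluriharmonic off the zero section, while the second reproduces $\omega_M=-\sqrt{-1}\,\partial\bar\partial\log h$ by the hypothesis on $h$.

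Using $\partial x=x\,\partial\log x$, the identity above rearranges to
\[
\sqrt{-1}\,\partial\bar\partial x=\frac{1}{x}\sqrt{-1}\,\partial x\wedge\bar\partial x+x\,\pi^*\omega_M.
\]
Substituting this into the chain-rule expansion $\sqrt{-1}\,\partial\bar\partial u(x)=u''(x)\sqrt{-1}\,\partial x\wedge\bar\partial x+u'(x)\sqrt{-1}\,\partial\bar\partial x$ and collecting terms yields the clean Calabi-ansatz decomposition
\[
\omega_D=\bigl(1+xu'(x)\bigr)\pi^*\omega_M+\left(u''(x)+\frac{u'(x)}{x}\right)\sqrt{-1}\,\partial x\wedge\bar\partial x.
\]
For $u(x)=-\log(1-x)$ one has $u'(x)=1/(1-x)$ and $u''(x)=1/(1-x)^2$, giving the coefficients $1+xu'(x)=1/(1-x)\geq 1$ and $u''(x)+u'(x)/x=1/(x(1-x)^2)\geq 0$ on $[0,1)$.

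Since $\sqrt{-1}\,\partial x\wedge\bar\partial x$ is a nonnegative semi-definite $(1,1)$-form, the first summand is already $\geq\pi^*\omega_M$ and the second is $\geq 0$, so $\omega_D\geq\pi^*\omega_M$ on $D(L^*)\setminus M$; by continuity the inequality extends across the zero section. The only subtlety I anticipate is the notational bookkeeping between forms living on $L^*$ and those pulled back from $M$: verifying that $\sqrt{-1}\,\partial\bar\partial\log x=\pi^*\omega_M$ is independent of the local trivialization is exactly what makes $\omega_M=-\sqrt{-1}\,\partial\bar\partial\log h$ a legitimate globally defined K\"ahler form. Once that is in hand, the argument is purely algebraic and the hard part never materializes.
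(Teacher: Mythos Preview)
Your proof is correct. The paper takes a shorter, less explicit route: it simply observes that $x=|v|^2_{h^{-1}}$ (equivalently $x-1$) is plurisubharmonic on $D(L^*)$, so that $-\sqrt{-1}\partial\bar\partial\log(1-x)\geq 0$ follows at once from the fact that $-\log(1-t)$ is convex and increasing; the zero section is then handled by a direct computation of $\omega_D|_M$. Your explicit Calabi-ansatz decomposition
\[
\omega_D=\frac{1}{1-x}\,\pi^*\omega_M+\frac{1}{x(1-x)^2}\,\sqrt{-1}\,\partial x\wedge\bar\partial x
\]
reaches the same conclusion but yields more: the coefficient $1/(1-x)$ in front of $\pi^*\omega_M$ gives the sharper pointwise bound $g_D\geq\frac{1}{1-x}\pi^*g_M$, and the splitting makes the horizontal/vertical structure of $g_D$ transparent in a way the paper only obtains later, in its curvature computations. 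The paper's argument is quicker; yours is more informative and would generalize immediately to other choices of $u$.
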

\begin{proof}
Since the boundary definite function $|v|^{2}_{h^{-1}}-1=\frac{|v|^{2}}{h(z)}-1$ is negative and strictly plurisubharmonic,
 we know the $(1,1)$ form $-\sqrt{-1}\partial\bar\partial \log(1-|v|_{h^{-1}}^{2})\geq 0$ on $D(L^{*})\setminus M$.
Thus we have $g_{D}\geq\pi^{*}(g_M)$.
By \eqref{omega D1}, we have
 \begin{equation}\label{m1}
\begin{aligned}
\omega_{D}&=\frac{\sqrt{-1}}{(h-|v|^{2})^{2}}\left(h dv\wedge d\bar{v}-\bar{v} dv\wedge \bar{\partial} h
 -v\partial h\wedge d\bar{v}\right.  +\left.\partial h\wedge \bar{\partial}h-(h-|v|^{2})\partial\bar{\partial}h\right).
\end{aligned}
\end{equation}
Restrict it on $M$, $\omega_{D}=\frac{\sqrt{1}}{h}dv \wedge d\bar{v}+ \sqrt{-1}\partial \bar{\partial}(-\log h)=\frac{\sqrt{1}}{h}dv \wedge d\bar{v}+\pi^{*}(\omega_M).$ This implies that $g_{D}\geq\pi^{*}(g_M)$.
\end{proof}

\begin{lemma}
Let $D(L^{*})$ be the disc bundle  over $M$. If $g_{M}$ is complete, then
$g_{D}$ is complete.
\end{lemma}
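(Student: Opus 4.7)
The plan is to verify completeness via a standard exhaustion argument: I will show that every smooth curve $\gamma:[0,T)\to D(L^*)$ of finite $g_D$-length must remain in a compact subset of $D(L^*)$, and therefore extends continuously to $t=T$. Two ingredients suffice for this: the inequality $g_D\geq \pi^*(g_M)$ proved in the previous lemma, which controls the projection of $\gamma$ to $M$, and a gradient estimate for the plurisubharmonic exhaustion function $\psi:=-\log(1-|v|^2_{h^{-1}})$, which controls how close $\gamma$ can get to the boundary $\{|v|_{h^{-1}}=1\}$.

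Given a curve $\gamma$ of $g_D$-length $L<\infty$, the projection $\pi\circ\gamma$ is a curve in $M$ whose $g_M$-length is at most $L$ by the previous lemma. By the assumed completeness of $(M,g_M)$, this projection stays in a compact set $K\subset M$; in particular $\gamma(t)\in\pi^{-1}(K)$ for all $t$. The only remaining obstruction to precompactness is that $|v(\gamma(t))|_{h^{-1}}$ might tend to $1$.

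To rule this out, I would derive the universal pointwise bound $g_D^{i\bar j}\psi_i\psi_{\bar j}\leq 1$. Setting $f:=|v|^2_{h^{-1}}$, a direct computation yields
\[
\psi_{i\bar j}=\psi_i\psi_{\bar j}+\frac{f_{i\bar j}}{1-f}.
\]
The boundary defining function $f-1$ is strictly plurisubharmonic (as observed in the proof of the previous lemma), so $f_{i\bar j}\geq 0$ as a Hermitian matrix and therefore $\psi_{i\bar j}\geq \psi_i\psi_{\bar j}$. Since $\pi^*\omega_M\geq 0$, the K\"ahler metric further satisfies $(g_D)_{i\bar j}\geq \psi_{i\bar j}\geq \psi_i\psi_{\bar j}$. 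Inverting via the standard Schur-complement identity $A\geq vv^*\Longleftrightarrow v^*A^{-1}v\leq 1$ (for positive-definite Hermitian $A$) produces the claimed bound and hence $|\nabla\psi|_{g_D}\leq\sqrt{2}$ pointwise. Integrating along $\gamma$ gives $|\psi(\gamma(t))-\psi(\gamma(0))|\leq\sqrt{2}\,L$, so $|v(\gamma(t))|_{h^{-1}}$ stays bounded away from $1$ by a positive amount depending only on $\gamma(0)$ and $L$. Combined with the previous paragraph, $\gamma$ remains in a compact subset of $D(L^*)$ and thus extends continuously across $t=T$.

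The main technical point is the matrix inequality that produces the gradient bound on $\psi$; it relies precisely on the strict plurisubharmonicity of $|v|^2_{h^{-1}}$ already exploited in the previous lemma, and the rest is essentially formal. An alternative approach would be to argue fiberwise in a local trivialization, comparing the fiber metric to the complete Poincar\'e metric on the unit disc, but the global gradient estimate above avoids the patching and yields completeness in a single stroke.
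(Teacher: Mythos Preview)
Your proposal is correct and follows essentially the same two-step strategy as the paper: use the comparison $g_D\geq\pi^*(g_M)$ from the previous lemma to control the projection to $M$, and use a gradient bound on the logarithmic exhaustion to prevent escape toward the fiber boundary $\{|v|_{h^{-1}}=1\}$. The paper phrases the conclusion via diverging distances to a boundary sequence rather than precompactness of finite-length curves, and it works with $\Psi=-\log(h-|v|^2)$ instead of your $\psi=-\log(1-|v|^2_{h^{-1}})$ (these differ by $-\log h$, a function pulled back from $M$), but these are cosmetic differences.

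The one substantive distinction is that the paper obtains the gradient bound $\|\nabla\Psi\|_{g_D}<1$ by citing the Cheng--Yau argument for strictly pseudoconvex boundaries, whereas you derive it directly from the identity $\psi_{i\bar j}=\psi_i\psi_{\bar j}+\tfrac{f_{i\bar j}}{1-f}$ together with the Schur-complement equivalence $A\geq vv^*\Longleftrightarrow v^*A^{-1}v\leq 1$. Your route is more self-contained and makes transparent exactly which positivity is being used (the plurisubharmonicity of $f$), at the cost of a short linear-algebra digression; the paper's citation is shorter but hides the mechanism. Either way the argument goes through.
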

\begin{proof}
To prove the completeness, it suffices to show that, for a fixed point $p_{0}\in M$, given any sequence
$\{x_j \}_{j=1}^{+\infty}$
of points approaching $b \in \partial D(L^*)$, $d(p_{0}, x_j )$ must diverge to $\infty$ as $j\rightarrow +\infty$.
Let
$x \in D(L^*)$ be any point and $\gamma : [0, 1] \rightarrow D(L^*)$ be a piecewise $C^{1}$-curve joining a point $p_{0}$ in $M$ to $x$. Then
$\pi\circ \gamma: [0, 1] \rightarrow M$ is a piecewise $C^{1}$-curve joining $p_{0}$ to $\pi(x) \in M$. Denote by $d_{M}(\cdot, \cdot)$,
resp. $d_{D}(\cdot, \cdot)$, the distance function induced by  the metric $g_M$ on $(M, g_{M})$, resp. by $g_D$ on $(D(L^*), g_{D})$.
Let $\{x_j \}_{j=1}^{+\infty}$
be a discrete sequence on $D(L^*)$ converging to $b \in \partial M \subset \partial D(L^*)$.
Since $(M, g_{M})$ is complete and $g_{D}\geq\pi^{*}(g_M)$, there exists a positive constant $c$ such taht  $d_{\omega_{D}}
(x_j , p_{0})$ $\geq c \cdot $ $d_{M}(\pi(x_{j} ), p_{0}) \rightarrow +\infty$.
On the other hand, if $b \in \partial D(L^*)\setminus\partial M$,
then $b$ is a smooth strictly pseudoconvex boundary point of $D(L^*)$.
In local coordinate, $$g_{D}=\sum\Psi_{j\bar{k}}dz_{j}\otimes d\bar z_{k},$$
where $\Psi=-\log(h(z)-|v|^{2})$ is the definite function of the smooth strictly pseudoconvex boundary  of $D(L^*)$.
By the discussion of Cheng-Yau in \cite{Cheng1980} (see page 509), we know that $\|\nabla \Psi\|_{g_{D}}<1$.
\begin{eqnarray*}
% \nonumber to remove numbering (before each equation)
\lim_{s\rightarrow+\infty}\int_{0}^{s}\|\gamma'(t)\|_{g_{D}}dt
&\geq&\lim_{s\rightarrow+\infty}\int_{0}^{s}\|\nabla \Psi\|_{g_{D}}\|\gamma'(t)\|_{g_{D}}dt
\geq\lim_{s\rightarrow+\infty}\int_{0}^{s}\langle\nabla \Psi, \gamma'(t)\rangle_{g_{D}} dt\\
&=&\lim_{s\rightarrow+\infty}\int_{0}^{s} \frac{d}{dt} (\Psi\circ\gamma(t)) dt
=\lim_{s\rightarrow+\infty}(\Psi\circ\gamma(s)-\Psi\circ\gamma(0))
=+\infty.
\end{eqnarray*}
The last equation depends on the exhaustion  of $\Psi$.
\end{proof}

\begin{lemma}\label{ric}
The  Ricci curvature tensor of $g_{D}$ is
$$\mathrm{Ric}(g_D)=-(m+2)g_{D}+(m+1)g_{M}+\mathrm{Ric}(g_{M}).$$
\end{lemma}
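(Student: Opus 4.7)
The plan is to reduce the lemma to the local computation of $\det(g_D)$ and then apply the K\"ahler Ricci formula $\mathrm{Ric}(g_D) = -\sqrt{-1}\partial\bar\partial \log\det(g_D)$. Fix a point on $M$ and choose local coordinates $z_1,\ldots,z_m$ together with a trivialization of $L$ in which the Hermitian metric is represented by a positive function $h(z)$; let $v$ denote the fiber coordinate on $L^{*}$. Set $\phi_M := -\log h$, a local K\"ahler potential for $\omega_M$, and $y := |v|^2_{h^{-1}} = |v|^2 e^{\phi_M}$, so that $\partial_i y = y\,\phi_{M,i}$ and $\partial_v y = \bar v\, e^{\phi_M}$. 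Then $\Phi(z,v) := \phi_M(z) - \log(1-y)$ is a local K\"ahler potential for $\omega_D$.

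Next I would compute the complex Hessian $(\Phi_{a\bar b})$ in block form with horizontal indices $i,\bar j$ and vertical index $v,\bar v$. Abbreviating $F(y) := 1 + y u'(y) = 1/(1-y)$ and $F'(y) = 1/(1-y)^2$, a direct differentiation produces
\begin{equation*}
g_D = \begin{pmatrix} F(y)\,\phi_{M,i\bar j} + y F'(y)\,\phi_{M,i}\phi_{M,\bar j} & F'(y)\,e^{\phi_M} v\,\phi_{M,i} \\ F'(y)\,e^{\phi_M} \bar v\,\phi_{M,\bar j} & F'(y)\,e^{\phi_M} \end{pmatrix}.
\end{equation*}
The determinant is then evaluated by the Schur complement with respect to the scalar lower-right entry: the rank-one correction equals $F'(y)\,e^{\phi_M}|v|^2\,\phi_{M,i}\phi_{M,\bar j} = yF'(y)\,\phi_{M,i}\phi_{M,\bar j}$, which cancels exactly the rank-one perturbation in the horizontal block and leaves the Schur complement equal to $F(y)\,\phi_{M,i\bar j}$. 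Therefore
\begin{equation*}
\det(g_D) = F'(y)\,e^{\phi_M}\cdot F(y)^m\det(g_M) = \frac{e^{\phi_M}}{(1-y)^{m+2}}\,\det(g_M).
\end{equation*}

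Finally, applying $-\sqrt{-1}\partial\bar\partial\log$ to this expression and identifying each contribution gives $-\sqrt{-1}\partial\bar\partial\phi_M = -\pi^{*}\omega_M$; $(m+2)\sqrt{-1}\partial\bar\partial\log(1-y) = -(m+2)(\omega_D-\pi^{*}\omega_M)$, using the defining identity $\omega_D - \pi^{*}\omega_M = \sqrt{-1}\partial\bar\partial u(y) = -\sqrt{-1}\partial\bar\partial\log(1-y)$; and $-\sqrt{-1}\partial\bar\partial\log\det(g_M) = \pi^{*}\mathrm{Ric}(g_M)$. Summing yields $\mathrm{Ric}(g_D) = -(m+2)\omega_D + (m+1)\pi^{*}\omega_M + \pi^{*}\mathrm{Ric}(g_M)$, which is the asserted formula at the level of $(1,1)$-forms. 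The main obstacle is the Schur-complement miracle --- that the fiber rank-one outer product exactly cancels the horizontal rank-one perturbation, producing a determinant that is a pure power of $(1-y)$ times $\det(g_M)$. This cancellation is specific to the choice $u(x) = -\log(1-x)$; once it is in place, the remaining differentiations are routine bookkeeping.
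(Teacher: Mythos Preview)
Your proof is correct and follows essentially the same route as the paper: both compute $\det(g_D)$ in local coordinates and then apply the K\"ahler--Ricci formula $\mathrm{Ric}=-\sqrt{-1}\partial\bar\partial\log\det$, with the paper working directly in terms of $h$ and simply asserting the determinant identity ``by a direct computation,'' while you work in terms of $\phi_M=-\log h$ and make the Schur-complement cancellation explicit. One small correction to your closing commentary: the rank-one cancellation in the Schur complement actually holds for \emph{any} Calabi-ansatz profile $u$ (the off-diagonal outer product always matches the horizontal rank-one term), so what is genuinely special to $u(x)=-\log(1-x)$ is only that the resulting scalar factor $F'(y)F(y)^m$ collapses to the clean power $(1-y)^{-(m+2)}$.
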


\begin{proof}
Let $g_{D}$ be the complete K\"ahler metric given by \eqref{omega D1}.
The matrix of metric $g_D$ is denoted by $(g_{\alpha\bar \beta})$, where  $1\leq \alpha, \beta\leq m+1$.
Define $h_{j}:=\frac{\partial h}{\partial z_{j}}$, $h_{j\bar k}:=\frac{\partial^{2} h}{\partial z_{j} \partial \bar z_{k}}$, $1\leq j, k\leq m$.
By  \eqref{m1},
we get
\begin{equation}\label{equ:Hession of solution 0}
(g_{\alpha\bar \beta})=\frac{1}{(h-|v|^{2})^{2}}\left(
  \begin{array}{c|c}
  h & -h_{\bar{k}}\bar v \\
    \hline
   -h_{j}v  &  \ \ \ -(h-|v|^{2})h_{j\bar k}+h_{j}h_{\bar k} \\
  \end{array}
\right).
\end{equation}
By a directly computation, we have
\begin{equation}\label{equ:det}
\begin{aligned}
 \det(g_{\alpha\bar \beta})&=\frac{h^{m+1}}{(h-|v|^{2})^{m+2}}\det \left(\frac{h_{j}h_{\bar k}-hh_{j\bar k}}{h^{2}}\right)\\
&=\frac{h^{m+1}}{(h-|v|^{2})^{m+2}}\det \left((-\log h)_{j\bar k}\right).
\end{aligned}
\end{equation}
Inserting the determinant into
 the Ricci form
$
-\sqrt{-1}\partial \bar \partial \log \det (g_{\alpha\bar \beta}).
$
The proof is complete.
\end{proof}

\begin{proposition}\label{h sect}
Let  $\pi:(L, h)\rightarrow M$ be a positive Hermitian line bundle over
 a K\"ahler manifold $(M, g_{M})$ such that   $\omega_{M}=-\sqrt{-1}\partial \bar\partial\log h$.
 Let $(L^{*}, h^{-1})\rightarrow M$ be the dual bundle of $L$.
Consider the disc bundle
$
 D(L^{*}) := \{v \in L^{*} : |v|_{h^{-1}} < 1\},
$
where $|v|_{h^{-1}}$ denotes the norm of $v$ with respect to the metric $h^{-1}$.
Equip it with a K\"ahler metric $g_{D}$ with the K\"ahler form
$
\omega_{D}:=\pi^{*}(\omega_M)-\sqrt{-1}\partial\bar\partial \log(1-|v|_{h^{-1}}^{2}).
$
For any  fixed point $\eta_{0}\in D(L^{*})$,  there exists a local coordinate system around it
such that
the holomorphic sectional curvature of $g_{D}$ at $\eta_{0}=(z_{0}, v)$ is
$$\Theta(\eta_{0}, d\eta)=\frac{-2\left(g_{D}^{2}(\eta_{0})-\frac{g_{M}^{2}(z_{0})}{1-|v|^{2}}\left(1+\frac{1}{2}\Theta_{M}(z_{0},dz)
\right)\right)}
{g_{D}^{2}(\eta_{0})},$$
where
$\Theta_{M}(z_{0},dz)$ is the holomorphic sectional curvature of $g_{M}$, and
$$g_{M}(z_{0})=\sum \delta_{j\bar k}dz_{j}\overline{dz}_{k}, \ \
g_{D}(\eta_{0})=\frac{1}{1-|v|^{2}}\left(\frac{dv\overline{dv}}{1-|v|^{2}}+g_{M}(z_{0})\right).$$
\end{proposition}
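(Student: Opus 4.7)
My plan is to prove the formula by directly computing the curvature of $g_D$ at $\eta_0 = (z_0, v)$ from a Taylor expansion of the K\"ahler potential $\Phi(z,w) = -\log(h(z) - |w|^2)$ in a carefully chosen coordinate system. I would pick K\"ahler-normal (Bochner) coordinates $(z_1,\ldots,z_m)$ for $g_M$ at $z_0$, together with a local holomorphic trivialization of $L^*$ in which $h(z_0) = 1$ and all pure holomorphic derivatives of $-\log h$ at $z_0$ vanish; this is always possible because the trivialization of $L^*$ can be modified by a pluriharmonic factor. Under these choices the potential $\phi_M := -\log h$ has the expansion
$$\phi_M(z) = |z|^2 - \tfrac{1}{4} \sum R^M_{a\bar b c\bar d}(z_0)\, z^a z^c \bar z^b \bar z^d + (\text{higher order}),$$
with no pure holomorphic or antiholomorphic terms and no bidegree-$(p,1)$ or $(1,q)$ contributions for $p,q \geq 2$. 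Exponentiating gives an analogously clean fourth-order expansion of $h$.

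Writing $w = v + w'$ and $c := 1 - |v|^2$, I would then expand $\Phi = -\log c - \log(1 - U)$ with
$$U = \tfrac{1}{c}\bigl(|z|^2 + \bar v w' + v\bar w' + |w'|^2\bigr) - \tfrac{1}{c}\bigl(\tfrac{1}{4} R^M_{a\bar b c\bar d}(z_0)\, z^a z^c \bar z^b \bar z^d + \tfrac{1}{2}|z|^4\bigr) + (\text{higher order}),$$
and $\Phi = -\log c + U + U^2/2 + U^3/3 + U^4/4 + \cdots$. Reading off the bidegree-$(1,1)$ piece at $\eta_0$ directly verifies $g^D_{a\bar b}(\eta_0) = \delta_{ab}/c$, $g^D_{w\bar w}(\eta_0) = 1/c^2$, $g^D_{a\bar w}(\eta_0) = 0$, matching the claimed expression for $g_D(\eta_0)$. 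From the bidegree-$(2,1)$ and $(1,2)$ pieces, the only nonvanishing third derivatives of $\Phi$ at $\eta_0$ are $\Phi_{a\bar b w}(\eta_0) = \bar v\,\delta_{ab}/c^2$, $\Phi_{w\bar w w}(\eta_0) = 2\bar v/c^3$, and their conjugates; critically, the K-coordinate choice forces $\Phi_{a\bar b c}(\eta_0) = 0$ and $\Phi_{a\bar w c}(\eta_0) = 0$.

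Next I would apply the standard K\"ahler curvature formula
$$R^D_{\alpha\bar\beta\gamma\bar\delta}(\eta_0) = -\Phi_{\alpha\bar\beta\gamma\bar\delta}(\eta_0) + g^{\mu\bar\nu}(\eta_0)\,\Phi_{\alpha\bar\nu\gamma}(\eta_0)\,\Phi_{\mu\bar\beta\bar\delta}(\eta_0),$$
with the block-diagonal inverse metric $g^{a\bar b}(\eta_0) = c\,\delta_{ab}$ and $g^{w\bar w}(\eta_0) = c^2$. Case-splitting by base/fiber index placement leaves, up to the standard K\"ahler symmetries, only three nonvanishing blocks:
\begin{align*}
R^D_{a\bar b c\bar d}(\eta_0) &= \tfrac{1}{c}R^M_{a\bar b c\bar d}(z_0) - \tfrac{|v|^2}{c^2}\bigl(\delta_{ab}\delta_{cd} + \delta_{ad}\delta_{cb}\bigr),\\
R^D_{w\bar w a\bar b}(\eta_0) &= -\delta_{ab}/c^3,\\
R^D_{w\bar w w\bar w}(\eta_0) &= -2/c^4.
\end{align*}
Contracting with $X = (dz, dw)$ and assembling all the equivalent copies in the sum produces
$$R^D(X,\bar X, X, \bar X)(\eta_0) = \tfrac{\Theta_M(z_0, dz)\, |dz|^4}{c} - \tfrac{2|v|^2 |dz|^4}{c^2} - \tfrac{4|dz|^2 |dw|^2}{c^3} - \tfrac{2|dw|^4}{c^4},$$
which, using $|v|^2 = 1 - c$, rearranges to $-2\,g_D(\eta_0)^2 + \tfrac{2|dz|^4}{c}\bigl(1 + \tfrac{1}{2}\Theta_M(z_0, dz)\bigr)$. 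Dividing by $g_D(\eta_0)^2$ yields the formula in the statement.

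The main obstacle is the bookkeeping in the last step. The connection term in the curvature formula cannot be dropped: one finds $\Phi_{w\bar w w\bar w}(\eta_0) = 2(1 + 2|v|^2)/c^4$, and only after adding the connection contribution $g^{w\bar w}|\Phi_{w\bar w w}|^2 = 4|v|^2/c^4$ does the all-fiber block collapse to $-2/c^4$; similar $|v|^2$-dependent cancellations occur in each of the mixed blocks. The K\"ahler-normal (Bochner) choice for $g_M$ is essential precisely because it removes every bidegree-$(p,1)$ and $(1,q)$ contribution of $h$ at $z_0$, which forces the pure-base third derivatives $\Phi_{a\bar b c}(\eta_0) = \Phi_{a\bar w c}(\eta_0) = 0$ and thereby clears the connection term in the four-base-index block down to the clean $R^M/c$-plus-correction form. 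The identity $|v|^2 + c = 1$ then assembles the many pieces into the compact expression of the proposition.
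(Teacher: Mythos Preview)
Your proposal is correct and follows essentially the same approach as the paper: choose normal coordinates on $M$ at $z_0$ together with a local frame of $L^*$ in which $h(z_0)=1$ and $\partial h(z_0)=0$, then compute the K\"ahler curvature tensor of $g_D$ directly from the potential $\Phi=-\log(h-|w|^2)$. The paper carries this out in matrix form, writing $T=(g_{\alpha\bar\beta})$ and evaluating $-\bar\partial\partial T+\partial T\cdot T^{-1}\overline{\partial T}^{t}$ at $(z_0,v)$, whereas you read the same data off a Taylor expansion of $\Phi$ and then plug into $R^D_{\alpha\bar\beta\gamma\bar\delta}=-\Phi_{\alpha\bar\beta\gamma\bar\delta}+g^{\mu\bar\nu}\Phi_{\alpha\bar\nu\gamma}\Phi_{\mu\bar\beta\bar\delta}$; these are two notations for the same computation, and your intermediate block values and final rearrangement via $|v|^2=1-c$ match the paper's line by line.

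One small overstatement: full Bochner normal coordinates are not actually ``essential''. The only third derivatives that enter the connection term are of type $(2,1)$, and $\Phi_{a\bar b c}(\eta_0)=-h_{a\bar b c}(z_0)/c$ already vanishes under ordinary geodesic coordinates (since $\partial_c g_{a\bar b}(z_0)=-h_{a\bar b c}(z_0)$ at such a point). The paper accordingly works only with geodesic coordinates plus the frame normalization, and that is all your argument really uses as well.
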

\begin{proof}
Denote by $P_{0}\in M$ the project point of $\eta_{0}$ under the mapping $ \pi$.
 We take the geodesic
coordinate $(U, z)$ around  $P_{0}$.  The metric $g_{M}$ is denoted by  $\sum g_{j\bar k}dz_{j} d\overline{z}_{k}$.
 At point $P_{0}$, we have that $g_{j\bar k}=\delta_{j\bar k}$,  and all first derivatives of the $g_{j\bar k}$ are zero.
Let $z_{0}$ be the coordinate of  $P_{0}$,
and $\varphi$ be a K\"ahler potential of $g_{M}$ in $U$.
Let $\varphi(z,w)$ be the polarized function of $\varphi$ on $U\times \mathrm{conj}(U)$.
Then $\phi(z)=\varphi(z,\bar z)-\varphi(z,\bar z_{0})-\varphi(z_{0}, \bar z)+\varphi(z_{0},\bar z_{0})$
is an another K\"ahler potential function such that $\sqrt{-1}\partial\bar\partial\phi(z)=\sqrt{-1}\partial\bar\partial\varphi(z)=\omega_{M}$.
Recall that $-\partial\bar\partial\log h=\partial\bar\partial\phi$.
It is equivalent to
$h^{-1}|e^{f}|^{2}=e^{\phi}$
for a certain holomorphic function $f$ in $U$.
Choose a local free frame such that
$h^{-1}=e^{\phi}.$
Hence we have $$h(z_{0})=1, h_{j}(z_{0})=-e^{-\phi}\phi_{j}(z_{0})=0, h_{\bar k}(z_{0})=-e^{-\phi}\phi_{\bar k}(z_{0})=0,
h_{j\bar k}(z_{0})=-\delta_{j\bar k},$$
where $h_j=\frac{\partial h}{\partial z_{j}}$, $h_{\bar{k}}=\frac{\partial h}{\partial \bar{z}_{k}}$, $h_{j\bar k}=\frac{\partial^2 h}{\partial z_{j}\partial \bar{z}_k}$.
Let $(z, v)$ be the local coordinate of $D(L^*)$.
 Then  the matrix of the metric $g_{D}$ is
\begin{equation}\label{equ:Hession of solution 1}
T:=(g_{\alpha\bar \beta})=\frac{1}{(h-|v|^{2})^{2}}\left(
  \begin{array}{c|c}
  h & -h_{\bar{k}}\bar v \\
    \hline
   -h_{j}v  &  \ \ \ -(h-|v|^{2})h_{j\bar k}+h_{j}h_{\bar k} \\
  \end{array}
\right),
\end{equation}
where $1\leq j, k\leq m$, $1\leq \alpha, \beta\leq m+1$.
At the point $(z_{0}, v)$, we have
\begin{equation}
T(z_{0}, v)=\left(
  \begin{array}{cc}
  \frac{1}{(1-|v|^{2})^{2}} & 0 \\

  0  &  \ \ \ \frac{1}{1-|v|^{2}}I_{m} \\
  \end{array}
\right),
\end{equation}
and
\begin{equation}
T^{-1}(z_{0}, v)=\left(
  \begin{array}{cc}
 (1-|v|^{2})^{2} & 0 \\

  0  &  \ \ \ (1-|v|^{2})I_{m} \\
  \end{array}
\right).
\end{equation}
For convenience, we define
\begin{equation}
\partial T=\left(
  \begin{array}{cc}
 \partial T_{11} &  \partial T_{12} \\

   \partial T_{21}  &   \partial T_{22} \\
  \end{array}
\right).
\end{equation}
By a direct computation, we have
\begin{eqnarray*}
                              % \nonumber to remove numbering (before each equation)
\partial T_{11}&=&-2(h-|v|^{2})^{-3}\partial (h-|v|^{2})\cdot h+(h-|v|^{2})^{-2}\partial h,\\
\partial T_{12}&=&(\cdots,-\partial(h-|v|^{2})^{-2}h_{\bar k}\bar v-(h-|v|^{2})^{-2}\partial h_{\bar k}\bar v,\cdots),\\
\partial T_{21}&=&(\cdots,-\partial(h-|v|^{2})^{-2}h_{j}v-(h-|v|^{2})^{-2}\partial(h_jv), \cdots)^t,
\end{eqnarray*}
where $t$ denotes the transpose of the matrix. Thus we get
$\partial T_{11}|_{z_{0}}=2(1-|v|^{2})^{-3}\bar{v} dv, \partial T_{12}|_{z_{0}}=(1-|v|^{2})^{-2}\bar v dz, \partial T_{21}|_{z_{0}}=0$.
It is easy to see that
\begin{eqnarray*}
% \nonumber to remove numbering (before each equation)
-(h-|v|^{2})h_{j\bar k}+h_{j}h_{\bar k}&=&h(h-|v|^{2})\frac{h_{j}h_{\bar k}-h_{j\bar k}h}{h^{2}}+\frac{|v|^{2}h_{j}h_{\bar k}}{h}\\
&=&h(h-|v|^{2})g_{j\bar k}+\frac{|v|^{2}}{h}h_{j}h_{\bar k}.
\end{eqnarray*}
Let $B=(h_{j}h_{\bar k})$ and $T^{M}=(g_{j\bar k})$. Then we obtain that
\begin{eqnarray*}
\partial T_{22}&=&\partial\left(h(h-|v|^{2})^{-1}T^{M}+\frac{|v|^{2}}{h}(h-|v|^{2})^{-2}B\right)\\
&=&\partial h\cdot(h-|v|^{2})^{-1}T^{M}-h(h-|v|^{2})^{-2}\partial(h-|v|^{2})\cdot T^{M}\\
&&+h(h-|v|^{2})^{-1}\partial T^{M}+\partial[\frac{|v|^{2}}{h}(h-|v|^{2})^{-2}B].
\end{eqnarray*}
Since $\partial T^{M}|_{z_{0}}=0$, $B|_{z_{0}}=0$, $\partial B|_{z_{0}}=0$, we get
$\partial T_{22}|_{z_{0}}=(1-|v|^{2})^{-2}\bar v dv I_{m}.$ Thus, we have
\begin{equation}
\partial T_{z_{0}}=\left(
  \begin{array}{cc}
 2(1-|v|^{3})^{-3}\bar{v} dv & (1-|v|^{2})^{-2}\bar v dz \\

  0  &  \ \ \ \ (1-|v|^{2})^{-2}\bar v dv I_{m} \\
  \end{array}
\right).
\end{equation}
Notice that $\bar \partial \partial h|_{z_{0}}=-|dz|^{2}$ and $\bar \partial \partial h_{\bar k}|_{z_{0}}=0$.
For convenience, we induce some notations such as $|dv|^{2}:=dv\overline{dv}$, $|dz|^{2}:=\sum dz_{j}\overline{dz_{j}}$, $dz:=(dz_{1},\cdots,dz_{m})$. We get
\begin{eqnarray*}
\bar\partial \partial T_{11}|_{z_{0}}&=&\frac{4|v|^{2}+2}{(1-|v|^{2})^{4}}|dv|^{2}-\frac{1+|v|^{2}}{(1-|v|^{2})^{3}}\bar \partial \partial h
=\frac{4|v|^{2}+2}{(1-|v|^{2})^{4}}|dv|^{2}+\frac{1+|v|^{2}}{(1-|v|^{2})^{3}}|dz|^{2},\\
\bar\partial \partial T_{12}|_{z_{0}}&=&\frac{1+|v|^{2}}{(1-|v|^{2})^{3}}d\bar{v}dz, \ \ \ \ \ \ \ \ \ \ \ \ \ \
\bar\partial \partial T_{21}|_{z_{0}}= \frac{1+|v|^2}{(1-|v|^{2})^{3}}\overline{dz}^t dv,\\
\bar\partial \partial T_{22}|_{z_{0}}&=&\left[\frac{|v^{2}|}{(1-|v|^{2})^{2}}|dz|^{2}+\frac{1+|v|^{2}}{(1-|v|^{2})^{3}}|dv|^{2}\right]T^{M}|_{z_{0}}
+\frac{|v|^2}{(1-|v|^2)^2}\overline{dz}^tdz
+\frac{1}{1-|v|^{2}}\bar \partial \partial T^{M}|_{z_{0}}.
\end{eqnarray*}
At the point $\eta_{0}=(z_{0}, v)$, we can obtain that
\begin{align*}
&d\eta(-\bar\partial\partial T+\partial T\cdot T^{-1}\overline{\partial T}^{t})\overline{d\eta}^t|_{z_0}\\
&=\begin{pmatrix}
   dv &  dz \\
\end{pmatrix}
\begin{pmatrix}
 \frac{-2|dv|^2}{(1-|v|^2)^4}-\frac{1}{(1-|v|^2)^3}|dz|^2 & -\frac{1}{(1-|v|^{2})^{3}}d\bar v dz \\
 & \\
 -\frac{1}{(1-|v|^{2})^{3}}\overline{dz}^t dv &(-\frac{|dv|^2}{(1-|v|^{2})^{3}}-\frac{|v|^2|dz|^2}{(1-|v|^2)^2})I_{m} \\
 & -\frac{|v|^2}{(1-|v|^2)^2}\overline{dz}^tdz-\frac{1}{1-|v|^{2}}\bar \partial \partial T^{M}|_{z_{0}}\\
\end{pmatrix}
\begin{pmatrix}
  d\bar{v}\\
  \overline{dz}^t \\
\end{pmatrix}\\
&=\frac{-2|dv|^4}{(1-|v|^2)^4}-\frac{4|dv|^2|dz|^2}{(1-|v|^2)^3}-\frac{2|v|^2|dz|^4}{(1-|v|^2)^2}-\frac{1}{1-|v|^{2}}dz(\bar\partial \partial T^{M}|_{z_{0}})\overline{dz}^t\\
&=-2\left(\frac{|dv|^2}{(1-|v|^2)^2}+\frac{|dz|^2}{1-|v|^2}\right)^2+\frac{2|dz|^4}{1-|v|^2}-\frac{1}{1-|v|^{2}}dz(\bar\partial \partial T^{M}|_{z_{0}})\overline{dz}^t\\
&=-2\left[\left(\frac{|dv|^2}{(1-|v|^2)^2}+\frac{|dz|^2}{1-|v|^2}\right)^2-\frac{1}{1-|v|^2}\left(|dz|^4-\frac{1}{2}dz(\bar\partial \partial T^{M}|_{z_{0}})\overline{dz}^t\right)\right].
\end{align*}

we also notice that $g_{M}=\sum\delta_{jk}dz_{j}\otimes d\bar z_{k}$ at the point $z_0$, that is the matrix $T^{M}_{z_{0}}$ is unit matrix, therefore $|dz|^4=g^{2}_{M}(z_{0})$,
and $g_{D}(z_{0}, v)=\frac{1}{1-|v|^{2}}\left(\frac{|dv|^{2}}{1-|v|^{2}}+|dz|^{2}\right)$,
$\Theta_{M}(z_{0},dz)=\frac{dz(-\bar\partial\partial T^{M}(z_{0}))\overline{dz}^t}{|dz|^{4}}$.

We can derive that
\begin{eqnarray*}
% \nonumber to remove numbering (before each equation)
\Theta_{D}({\eta_{0}, d\eta})&=&\displaystyle{\frac{d\eta(-\bar\partial\partial T+\partial T\cdot T^{-1}\overline{\partial T}^{t})\overline{d\eta}^t}{(d\eta T \overline{d\eta}^{t})^{2}}|_{\eta_{0}}}\\
&=&\frac{-2\left(g_{D}^{2}(\eta_{0})-\frac{g^{2}_{M}(z_{0})}{1-|v|^{2}}\left(1+\frac{1}{2}\Theta_{M}(z_{0},dz)
\right)\right)}
{g_{D}^{2}(\eta_{0})}.
\end{eqnarray*}

\end{proof}

\begin{remark}
   $\Theta(\eta_{0}, d\eta)=-2$  if and only if $\Theta_{M}(z_{0},dz)=-2$.
   They are biholomorphic to complex Hyperbolic spaces.
\end{remark}

From properties of curvature of K\"ahler manifold, the following result  can be given by (4.3), (4.4) and (4.7)  in Theorem 4.2 of \cite{Jun Nie}.
We rewrite it as follows.
Denote by $R$ the Riemannian curvature of K\"ahler manifold $(M, \omega_{M})$.
Let $X, Y$ be two vectors in complex holomorphic tangent space $T^{(1,0)}_{p}M$ and
$x=X+\overline{X}, y=Y+\overline{Y}$.
 \begin{eqnarray}\label{Relation}
 % \nonumber to remove numbering (before each equation)
  && R(x, y,y,x)=R(X+\overline{X}, Y+\overline{Y},Y+\overline{Y},X+\overline{X}) \nonumber\\
   &=& -\frac{1}{8} Q(X+Y)-\frac{1}{8}Q(X-Y)
     +\frac{3}{8}Q(X+iY) +\frac{3}{8}Q(X-iY)
    -\frac{1}{2}Q(X) -\frac{1}{2}Q(Y),
 \end{eqnarray}
 where   $Q(X)=R(X,\overline{X},X,\overline{X})$.

\begin{proposition}\label{sect}
Under the conditions in Proposition \ref{h sect},
the sectional curvature of $(D(L^{*}), g_{D})$ at $\eta_{0}=(z_{0}, v)$ is
\begin{equation}
\kappa_{D}(\mu,\nu)=-2+2\left(-\kappa_{\Omega}(x,y)+\frac{1}{2}\kappa_{M}(x,y)\right)\frac{\parallel x\wedge y \parallel^{2}_{M}}{1-|v|^{2}},
\end{equation}
where
 $g_{\Omega}$ is a complex hyperbolic metric in a local coordinate $(\Omega(z_{0}), z)$,
$\kappa_{\Omega}$ and $\kappa_{M}$ are the  sectional curvatures of $g_{\Omega}$ and $g_{M}$, respectively.
\end{proposition}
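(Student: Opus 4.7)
The approach uses the polarization identity \eqref{Relation} from Theorem 4.2 of \cite{Jun Nie}, which expresses the Riemannian entry $R(x,y,y,x)$ of any K\"ahler curvature tensor as a real-linear combination of six holomorphic sectional curvature evaluations $Q(\cdot)=R(\cdot,\bar\cdot,\cdot,\bar\cdot)$ at $X,Y,X\pm Y,X\pm iY$. Since Proposition \ref{h sect} already provides an explicit closed-form expression for $Q_D$, this reduces the computation of the sectional curvature of $g_D$ to an algebraic manipulation.

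First, I would rewrite the conclusion of Proposition \ref{h sect} in the additive form
\begin{equation*}
Q_D(X) \;=\; -2\,[g_D(X,\bar X)]^2 \;+\; \frac{2\,[g_M(X_z,\bar X_z)]^2}{1-|v|^2} \;+\; \frac{Q_M(X_z)}{1-|v|^2},
\end{equation*}
where $X = X_v\partial_v + X_z$ decomposes a $(1,0)$-tangent vector at $\eta_0$ into its fiber and horizontal parts, in the local coordinates used in the proof of Proposition \ref{h sect}. Since \eqref{Relation} is $\mathbb R$-linear in $Q$, this yields a corresponding splitting $R_D(x,y,y,x)=R^{(1)}+R^{(2)}+R^{(3)}$, where each $R^{(i)}$ is obtained by applying \eqref{Relation} to the corresponding piece of $Q_D$.

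Next, I would interpret the three pieces geometrically. The first piece, built from $-2[g_D(\cdot,\bar\cdot)]^2$, is the $Q$-form of a K\"ahler metric of constant holomorphic sectional curvature $-2$ whose value at $\eta_0$ agrees with $g_D$; this is the complex hyperbolic model $g_\Omega$ appearing in the statement, and its contribution to $\kappa_D$ is the K\"ahler-angle-dependent complex hyperbolic sectional curvature, accounting for the leading $-2$ together with the $-\kappa_\Omega$ correction. The third piece depends only on $X_z$, and applying \eqref{Relation} in reverse yields immediately $R^{(3)}(x,y,y,x)=R_M(x_z,y_z,y_z,x_z)/(1-|v|^2) = \kappa_M(x,y)\,\|x\wedge y\|_M^2/(1-|v|^2)$. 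The second piece, built from $2[g_M(X_z,\bar X_z)]^2/(1-|v|^2)$, is the $Q$-form of a positive-constant-holomorphic-sectional-curvature model on the horizontal directions; its polarization is a direct computation whose K\"ahler-angle correction pairs with the analogous correction from the first piece to produce exactly the $-\kappa_\Omega$ coefficient in the formula.

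Finally, I would sum the three pieces, divide by $\|x\wedge y\|_D^2$ to extract $\kappa_D$, and simplify using the explicit form $g_D(X,\bar Y)=X_v\bar Y_v/(1-|v|^2)^2+g_M(X_z,\bar Y_z)/(1-|v|^2)$ at $\eta_0$ from the proof of Proposition \ref{h sect}. The principal obstacle is this last bookkeeping step: the K\"ahler-angle corrections in $R^{(1)}$ involve $\mathrm{Im}\,g_D(X,\bar Y)$ while those in $R^{(2)}$ involve $\mathrm{Im}\,g_M(X_z,\bar Y_z)$, and one must verify that after substituting the expression for $g_D(X,\bar Y)$ the cross-term in $\mathrm{Im}(X_v\bar Y_v)$ contributes only to the complex hyperbolic $-2$ baseline while the remaining pieces assemble into the stated formula with the correct wedge-norm normalization $\|x\wedge y\|_M^2/(1-|v|^2)$. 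This algebraic matching of imaginary cross-terms is the technical heart of the proof.
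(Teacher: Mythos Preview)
Your overall strategy coincides with the paper's: both apply the polarization identity \eqref{Relation} to the holomorphic sectional curvature formula from Proposition~\ref{h sect}. The only organizational difference is that the paper evaluates $Q_D$ at each of the six arguments $U,V,U\pm V,U\pm iV$ and then sums, whereas you decompose $Q_D$ additively into three pieces and polarize each summand separately; by linearity these are the same computation.

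You do, however, misidentify $g_\Omega$. In the paper $g_\Omega$ is \emph{not} a constant-curvature model on $D$ arising from your first piece: it is the complex hyperbolic metric of holomorphic sectional curvature $-1$ on a coordinate neighborhood $\Omega(z_0)$ of $z_0$ in the \emph{base} $M$, normalized so that $g_\Omega=g_M$ at $z_0$. Because $\Theta_\Omega\equiv -1$ one has $Q_\Omega(X)=-\|X\|_{g_M(z_0)}^4$, so that polarizing your \emph{second} piece $2\|X_z\|_M^4/(1-|v|^2)$ via \eqref{Relation} yields exactly
\[
-\,\frac{2\,R_\Omega(x,y,y,x)}{1-|v|^2}\;=\;-\,\frac{2\,\kappa_\Omega(x,y)\,\|x\wedge y\|_M^2}{1-|v|^2}.
\]
Thus the entire $-\kappa_\Omega$ contribution in the stated formula comes from piece two alone, not from any interaction with piece one. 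For piece one, the paper simply substitutes $g_D^2(U\pm V)=g_D^2(U\pm iV)=4$ and $g_D^2(U)=g_D^2(V)=1$ (using $\|U\|_D=\|V\|_D=1$, $\langle\mu,\nu\rangle_D=0$) into the six terms of \eqref{Relation}, after which the linear combination of constants collapses to the leading $-2$. There is no K\"ahler-angle pairing between pieces one and two of the kind you anticipate, so the ``cross-term matching'' you flag as the technical heart is not part of the paper's argument.
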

\begin{proof}
Denote by $R_{D}$ the Riemannian curvature of K\"ahler manifold $(D(L^{*}), g_{D})$.
The sectional curvature of $g_{D}$  at $\eta=(z_{0}, v)$  is
\begin{equation}
\kappa_{D}(\mu,\nu)=\frac{R_{D}(\mu,\nu,\nu,\mu)}{\parallel \mu\wedge \nu \parallel^{2}_{D}},
\end{equation}
where $\mu, \nu$ are two tangent vectors in  $T_{\eta}(D)$ at the point $\eta$,
$\parallel \mu\wedge \nu \parallel^{2}_{D}=\langle \mu,\mu\rangle_{D}\langle \nu,\nu \rangle_{D}-\langle \mu,\nu\rangle^{2}_{D}$,
$\langle \mu,\nu\rangle_{D}$ is the inner product of $\mu, \nu$ under the induced Riemannian metric $\mathrm{Re}g_{D}$.

From the definition of  sectional curvature,
we know that it is independent of the length of vectors.
Without loss of generality, we assume that $\mu, \nu$ are  orthonormal unit vectors.
Define $U=\frac{1}{2}(\mu-\sqrt{-1}J\mu), V=\frac{1}{2}(\nu-\sqrt{-1}J\nu)\in T^{(1,0)}D$.
Then $\mu=U+\overline{U}, \nu=V+\overline{V}$, and $\|U\|_{g_{D}}^{2}=\langle \mu, \mu \rangle_{D}=
\|V\|_{g_{D}}^{2}=\langle \nu, \nu \rangle_{D}=1$.
By the equality \eqref{Relation}, we have
 \begin{eqnarray}\label{R1}
 \begin{aligned}
 % \nonumber to remove numbering (before each equation)
  &  R_{D}(\mu, \nu,\nu,\mu)\\
  &=R_{D}(U+\overline{U}, V+\overline{V},V+\overline{V},U+\overline{U})\\
   &= -\frac{1}{8} Q_{D}(U+V) -\frac{1}{8}Q_{D}(U-V)
     +\frac{3}{8}Q_{D}(U+iV) +\frac{3}{8}Q_{D}(U-iV)
     -\frac{1}{2}Q_{D}(U) -\frac{1}{2}Q_{D}(V).
     \end{aligned}
 \end{eqnarray}

Notice that $T^{(1,0)}_{\eta}D=T^{(1,0)}_{z_{0}}M\oplus T^{(1,0)}_{v}\bigtriangleup=\mathrm{span}\{\frac{\partial}{\partial z_{1}},\cdots,\frac{\partial}{\partial z_{m}}, \frac{\partial}{\partial v}\}$.
Then   $U=X+X_{0}, V=Y+Y_{0}$, where $X, Y\in T^{(1,0)}_{z_{0}}M$ and $X_{0}, Y_{0}\in T^{(1,0)}_{v}\bigtriangleup$.
Consider a small neighbourhood $\Omega(z_{0})$ of $z_{0}$, equipped it with the complex hyperbolic metric $g_{\Omega}$ so that under the local coordinate
$(\Omega(z_{0}), z)$, the K\"ahler form $\omega_{\Omega}=2\sqrt{-1}\partial \bar\partial \log(1-|z|^{2})$.
Then
\begin{equation}\label{ghyp}
g_{i\bar{j}}=\frac{4(1-|z|^{2})\delta_{i\bar{j}}-z_{j}\bar z_{i}}{(1-|z|^{2})^{2}}.
\end{equation}
At the center point $z_{0}=0$, we have $g_{i\bar{j}}=\delta_{i\bar{j}}$. It is known that its holomorphic sectional curvature $\Theta_{\Omega}(X)=\frac{R_{\Omega}(X,\overline X,  X, \overline X)}{\|X\|^{4}_{g_{\Omega}}}=-1$.
It implies that $\|X\|^{4}_{g_{\Omega}(z_{0})}=-R_{\Omega}(X,\overline X,  X, \overline X)=|dz|^{4}(X,\overline{X},X,\overline{X})$.
From the above discussion, we have
 $\parallel X+Y\parallel^{4}_{g_{M}(z_{0})}=\|X+Y\|^{4}_{g_{\Omega}(z_{0})}=-R_{\Omega}(X+Y,\overline{X+Y},  X+Y, \overline{X+Y})= |dz|^{4}(X+Y,\overline{X+Y},X+Y,\overline{X+Y}),$ and
  \begin{eqnarray}
 \Theta_{M}(z_{0},dz)(X+Y)
 &=&\frac{R_{M}(X+Y,\overline{X+Y},X+Y,\overline{X+Y})}{\parallel X+Y\parallel^{4}_{g_{M}(z_{0})}}.
  \end{eqnarray}

  Now we compute the first term in the right hand of the equation \eqref{R1} by Proposition \ref{h sect}.
  \begin{eqnarray*}
 Q_{D}(U+V)&=& R_{D}(U+V,\overline{U+V},U+V,\overline{U+V})\\
 &=&\parallel U+V\parallel^{2}_{g_{D}(\eta_{0})}\Theta_{D}(\eta_{0}, d\eta)(U+V)\\
 &=&-2\left(g_{D}^{2}(\eta_{0})-\frac{|dz|^4}{1-|v|^{2}}\left(1+\frac{1}{2}\Theta_{M}(z_{0},dz)\right)\right)(U+V)\\
 &=&-2\left(4+\frac{Q_{\Omega}(X+Y)}{1-|v|^{2}}\left(1+\frac{1}{2}\frac{Q_{M}(X+Y)}{\parallel X+Y\parallel^{4}_{g_{M}(z_{0})}}\right)\right).
 \end{eqnarray*}

In  the same way, we have
  \begin{eqnarray*}
 Q_{D}(U-V)
 &=&-2\left(4+\frac{Q_{\Omega}(X-Y)}{1-|v|^{2}}\left(1+\frac{1}{2}\frac{Q_{M}(X-Y)}{\parallel X-Y\parallel^{4}_{g_{M}(z_{0})}}\right)\right),\\
 Q_{D}(U+iV)
 &=&-2\left(4+\frac{Q_{\Omega}(X+iY))}{1-|v|^{2}}\left(1+\frac{1}{2}\frac{Q_{M}(X+iY)}{\parallel X+iY\parallel^{4}_{g_{M}(z_{0})}}\right)\right),\\
 Q_{D}(U-iV)
 &=&-2\left(4+\frac{Q_{\Omega}(X-iY)}{1-|v|^{2}}\left(1+\frac{1}{2}\frac{Q_{M}(X-iY)}{\parallel X-iY\parallel^{4}_{g_{M}(z_{0})}}\right)\right),\\
 Q_{D}(U)
 &=&-2\left(1+\frac{Q_{\Omega}(X)}{1-|v|^{2}}\left(1+\frac{1}{2}\frac{Q_{M}(X)}{\parallel X\parallel^{4}_{g_{M}(z_{0})}}\right)\right),\\
 Q_{D}(V)
 &=&-2\left(1+\frac{Q_{\Omega}(Y)}{1-|v|^{2}}\left(1+\frac{1}{2}\frac{Q_{M}(Y)}{\parallel Y\parallel^{4}_{g_{M}(z_{0})}}\right)\right).
 \end{eqnarray*}
\end{proof}

Let $x=X+\overline{X}, y=Y+\overline{Y}\in T_{z_{0}}(M)$.
Insert the equations above into \eqref{R1}, it turns to be
$$R_{D}(\mu,\nu,\nu,\mu)=-2\left[1+\frac{R_{\Omega}(x,y,y,x)}{1-|v|^{2}}-\frac{1}{2}\frac{R_{M}(x,y,y,x)}{1-|v|^{2}}\right] \ \text{at} \ \eta_{0}=(z_{0}, v) \in D(L^{*}).$$

If $x=0$ or $y=0$, then $R_{D}(\mu,\nu,\nu,\mu)=-2$.
In the following, we assume that $x$ and $y$ are non-zero vectors.
 Notice that $\langle \mu,\nu\rangle_{D}=0$, $\langle \mu,\mu\rangle_{D}=\|U\|_{g_{D}}^{2}=1$, $\langle \nu,\nu\rangle_{D}=\| V\|_{g_{D}}^{2}=1$, we have  $\parallel \mu\wedge \nu \parallel^{2}_{D}=\langle \mu,\mu\rangle_{D}\langle \nu,\nu \rangle_{D}-\langle \mu,\nu\rangle^{2}_{D}=1$.
 At point $z_{0}$, we have $\parallel x\wedge y \parallel^{2}_{\Omega}=\parallel x\wedge y \parallel^{2}_{M}=\langle x,x\rangle_{M}\langle y,y\rangle_{M}-\langle x,y\rangle^{2}_{M}.$
The sectional curvature  of $(D, g_{D})$  at $\eta=(z_{0}, v)$ is
\begin{eqnarray}\label{kappa}
% \nonumber to remove numbering (before each equation)
\kappa_{D}(\mu,\nu)&=&R_{D}(\mu,\nu,\nu,\mu)=-2\left[1+\frac{\kappa_{\Omega}(x,y)\parallel x\wedge y \parallel^{2}_{\Omega}}{1-|v|^{2}}-\frac{1}{2}\frac{\kappa_{M}(x,y)\parallel x\wedge y \parallel^{2}_{M}}{1-|v|^{2}}\right]\\
&=&-2+2\left(-\kappa_{\Omega}(x,y)+\frac{1}{2}\kappa_{M}(x,y)\right)\frac{\parallel x\wedge y \parallel^{2}_{M}}{1-|v|^{2}}
\end{eqnarray}
where $\kappa_{\Omega}$ and $\kappa_{M}$ are the sectional curvatures of $g_{\Omega}$ in \eqref{ghyp} and $g_{M}$ respectively.

%%%%%%%%%%%%%%%%%%%%%%%%%%%%%

%%%%%%%%%%%%%%%%%%%%%%%%%%
\section{Negatively pinched properties}
In this section, we will study the $\delta$-pinched properties of the disc bundles by  estimating the (holomorphic) sectional curvature and Ricci curvature.

\begin{theorem}\label{Thm1}
The K\"ahler manifold  $(D(L^{*}), g_{D})$ is negatively holomorphically pinched  if and only if $(M, g_{M})$  is so.
\end{theorem}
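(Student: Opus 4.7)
The plan is to use the curvature formula from Proposition~\ref{h sect} as the single input. Writing $P(\eta_0, d\eta) := g_M^2(z_0)/\bigl(g_D^2(\eta_0)(1-|v|^2)\bigr)$, that formula becomes
\[
\Theta_D(\eta_0, d\eta) \;=\; -2 + P\,\bigl(2 + \Theta_M(z_0, dz)\bigr).
\]
My preliminary step is to show $0 \le P \le 1$. Non-negativity is obvious; for the upper bound, the expression for $g_D$ in Proposition~\ref{h sect} gives
$g_D(\eta_0)(1-|v|^2) = \tfrac{|dv|^2}{1-|v|^2} + g_M(z_0) \ge g_M(z_0)$,
whence $P \le 1-|v|^2 \le 1$.

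For the ``only if'' direction I would simply specialize to zero-section points with horizontal tangent vectors: at $\eta_0 = (z_0, 0)$ with $d\eta = (dz, 0)$, both $|v|$ and $dv$ vanish, so $P = 1$ and the formula collapses to $\Theta_D = \Theta_M$. Hence any pair of strictly negative two-sided bounds on $\Theta_D$ on $D(L^*)$ restricts to the same pair of bounds on $\Theta_M$ on $M$, and $(M,g_M)$ is negatively holomorphically pinched.

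For the ``if'' direction, assume $-A \le \Theta_M \le -A\delta < 0$. Since $P \in [0,1]$, the quantity $P(2+\Theta_M)$ lies between $0$ and $2+\Theta_M$ when $\Theta_M \ge -2$, and between $2+\Theta_M$ and $0$ when $\Theta_M < -2$. Splitting on the sign of $2+\Theta_M$ and taking the worst case over all $(\eta_0, d\eta)$ yields
\[
\min(-2,\,-A) \;\le\; \Theta_D \;\le\; \max(-2,\,-A\delta),
\]
and both ends are strictly negative. Thus $(D(L^*), g_D)$ is negatively holomorphically pinched.

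I do not anticipate a real obstacle: once the clean rewrite $\Theta_D = -2 + P(2+\Theta_M)$ with $P \in [0,1]$ is in place, both implications reduce to one-line observations, namely that $P = 1$ on the zero-section in base directions and that $P(2+\Theta_M)$ is monotone in $P$. The quantitative comparison of $\delta,\,\delta'$ against the threshold $A = 2$ promised in the introduction is then a direct refinement of the two-sided inequality displayed above.
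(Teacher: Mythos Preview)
Your proposal is correct and follows essentially the same route as the paper: both arguments pivot on the formula of Proposition~\ref{h sect}, rewrite it as $\Theta_D=-2+P(2+\Theta_M)$ with $0\le P\le 1$ (the paper's inequality~\eqref{s} is your bound $P\le 1-|v|^{2}$), and then split on the sign of $2+\Theta_M$ to obtain the sharp two-sided estimate $\min(-2,C_1)\le\Theta_D\le\max(-2,C_2)$. The only cosmetic difference is in the ``only if'' direction: the paper invokes the Gauss equation for the K\"ahler submanifold $M\hookrightarrow D(L^{*})$, whereas you specialize directly to $v=0$, $dv=0$ to get $P=1$ and $\Theta_D=\Theta_M$---this is the same observation (indeed $M$ is totally geodesic, so the second fundamental form term in the paper's Gauss equation vanishes), and your phrasing is arguably cleaner.
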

\begin{proof}
By \eqref{m1}, we know
$(M, g_{M})$  is the  K\"ahler submanifold of $D(L^{*})$.
Thus we have
$$\Theta_{M}(x, Jx)=\Theta_{D}(x, Jx)-\frac{2(g_{M}(B(x,x),B(x,x))}{(g_{M}(x,x))^{2}} \ \ \text{for} \ x\in T(M), x\neq0,$$
where  $B(x,x)$ is the second fundamental form of $(M, g_{M})$ in $D$.
The necessity is obvious. It therefore suffices to prove
 the sufficiency.

Assume that  $C_{1}\leq\Theta_{M}(X)\leq C_{2}<0$ for $\ X\in T^{(1,0)}(M), X\neq0$,
and $\parallel U\parallel_{g_{D}(\eta_{0})}=1$.
Since
 $\| U\|^{2}_{g_{D}(\eta_{0})}=\frac{1}{(1-|v|^{2})^{2}}|X_{0}|^{2}+\frac{1}{(1-|v|^{2})} \|X\|_{g_{M}(z_{0})}^{2},$
we get
\begin{equation}\label{s}
0\leq\frac{g^{2}_{M}(z_{0})(X,X)}{g_{D}^{2}(\eta_{0})(U,U)}\leq 1-|v|^{2}.
\end{equation}
The left-hand equality holds for $U=X_{0}$, while the right-hand equality holds for $U=X$.
By Proposition \ref{h sect}, we have
$$\Theta_{D}(\eta_{0}, d\eta)(U)=-2+\frac{1}{1-|v|^{2}}\frac{g^{2}_{M}(z_{0})(X,X)}{g_{D}^{2}(\eta_{0})(U,U)}\left(2+\Theta_{M}(z_{0},dz)(X)
\right).$$
A simple estimation shows that
\begin{equation}\label{11}
\Theta_{D}(\eta_{0}, d\eta)(U)\leq\left\{
  \begin{array}{ll}
    C_{2}, & \text{if} \ C_{2}\geq-2 \hbox{;} \\
    -2, &  \text{if} \ C_{2}\leq-2\hbox{;}
  \end{array}
\right.
\end{equation}
and
\begin{equation}\label{22}
\Theta_{D}(\eta_{0}, d\eta)(U)\geq\left\{
  \begin{array}{ll}
    -2, & \text{if} \ C_{1}\geq-2 \hbox{;} \\
    C_{1}, &  \text{if} \ C_{1}\leq-2\hbox{.}
  \end{array}
\right.
\end{equation}
By the arbitrariness of $\eta_{0}$, we obtain that
$\min \{-2, C_{1}\}\leq \Theta_{D}\leq \max \{-2, C_{2}\}$ on $D(L^{*})$.
 This estimation is sharp since \eqref{s} is sharp.
\end{proof}

The following result shows a comparison on the pinched constants between the disc bundle  and its base space.
\begin{corollary}
 If $(M, g_{M})$  is negatively $\delta$-holomorphically  pinched, then
   $(D(L^{*}), g_{D})$   is negatively $\delta'$-holomorphically  pinched, where $\delta'\geq \delta$ when $A\geq2$, and $\delta'< \delta$ when $0< A <2$.
\end{corollary}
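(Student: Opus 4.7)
The plan is to extract the pinching constant of $(D(L^{*}), g_{D})$ directly from the sharp two-sided bound already established in the proof of Theorem \ref{Thm1}. Substituting $C_{1}=-A$ and $C_{2}=-\delta A$ into \eqref{11} and \eqref{22} and assembling the two one-sided estimates yields
$$-\max\{2,\,A\}\;\leq\;\Theta_{D}\;\leq\;-\min\{2,\,\delta A\}\qquad\text{on }D(L^{*}).$$
Both bounds are attained: the fiber direction $U=X_{0}$ gives $\Theta_{D}=-2$, while horizontal unit vectors realise the extremes $-A$ and $-\delta A$ of $\Theta_{M}$. Consequently, the lower curvature bound of $g_{D}$ is $-A'$ with $A'=\max\{2,A\}$, and the pinching constant is exactly
$$\delta' \;=\; \frac{\min\{2,\,\delta A\}}{\max\{2,\,A\}}.$$

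From here the proof is an algebraic case analysis on which of the two natural scales (the fiber scale $2$ versus the base scale $A$) is dominant. When $0<A<2$, the standing assumption $\delta\leq 1$ forces $\delta A<A<2$, so $\delta'=\delta A/2$ and $\delta'/\delta=A/2<1$, yielding $\delta'<\delta$. When $A\geq 2$, the normalisation is controlled by $A$, so $\delta'=\min\{\delta,\,2/A\}$; one then separates the subcase $\delta A\leq 2$ (where $\delta'=\delta A/A=\delta$, so $\delta'\geq\delta$ holds with equality) from $\delta A>2$ (where $\delta'=2/A$ and the comparison with $\delta$ is read off directly from $A\geq 2$).

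The substantive work is already contained in Theorem \ref{Thm1}, so no additional curvature computation is required. The only point demanding care is bookkeeping: in each regime one must correctly identify which of the two scales is active on each side of the pinching inequality, and one must be careful that the chosen $A'$ is indeed the tight lower bound (so that $\delta'$ is the optimal, not merely a feasible, pinching constant). Once these four boundary cases are catalogued the claim follows by inspection; I do not foresee any genuine analytical obstacle beyond this combinatorial split.
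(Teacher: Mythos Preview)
Your approach is identical to the paper's: substitute $C_{1}=-A$, $C_{2}=-\delta A$ into \eqref{11} and \eqref{22} and split into the three natural cases according to the positions of $A$ and $\delta A$ relative to $2$. Your closed formula $\delta'=\min\{2,\delta A\}/\max\{2,A\}$ is simply a compact repackaging of that same case analysis, and your treatments of $A<2$ and of $A\geq 2,\ \delta A\leq 2$ match the paper's Cases~3 and~1 verbatim.

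There is, however, one genuine gap. In the subcase $A\geq 2$, $\delta A>2$ you write that ``the comparison with $\delta$ is read off directly from $A\geq 2$'' but never actually perform it. If you do, you obtain $\delta'=2/A$, while the case hypothesis says precisely $\delta>2/A$; hence $\delta'<\delta$ in this regime, contradicting the asserted conclusion $\delta'\geq\delta$ for $A\geq 2$. Concretely, $A=4$, $\delta=1$ gives $\Theta_{D}\in[-4,-2]$ by your own sharp bounds, so $\delta'=\tfrac{1}{2}<1=\delta$. The paper's proof contains exactly the same slip: its Case~2 asserts $\delta'=2/A\geq\delta$ under the standing hypothesis $2/A\leq\delta\leq 1$. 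So this is not a deficiency of your method relative to the paper's; rather, the stated inequality $\delta'\geq\delta$ fails in the subcase $A\geq 2$, $\delta>2/A$, and neither argument can rescue it as written.
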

\begin{proof}
Take $C_{1}=-A, C_{2}=-\delta A$ in \eqref{11} and \eqref{22}.

Case 1.
If $A\geq2$, $0<\delta\leq \frac{2}{A}$, then $-A\leq\Theta_{D}\leq -\delta A$, i.e.,
 $\delta'=\delta$;

Case 2.
If $A\geq2$, $\frac{2}{A}\leq\delta\leq1$, then $-A\leq\Theta_{D}\leq-2$, i.e.,
 $\delta'=\frac{2}{A}\geq\delta$;

Case 3.
If $A<2$,  then $-2\leq\Theta_{D}\leq -\delta A$, i.e., $\delta'=\delta\frac{A}{2}<\delta<1$.
\end{proof}

A directly corollary can be obtained by using Wu and Yau's result \cite{Wu2020} and Theorem \ref{Thm1}.
\begin{corollary}
The disc bundle $(D(L^{*}), g_{D})$ over a complete negatively holomorphic pinched K\"ahler manifold  $(M, g_{M})$ has
 a unique  complete K\"ahler-Einstein metric.
Moreover, the Kobayashi metric and K\"ahler-Einstein metric are equivalent.
\end{corollary}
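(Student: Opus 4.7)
The plan is to derive this corollary directly by verifying that the hypotheses of the Wu--Yau theorem from \cite{Wu2020} hold on the total space $(D(L^{*}), g_{D})$, and then transporting the conclusions of Wu--Yau back to our setting. Thus the work has already been done in the preceding lemmas and in Theorem \ref{Thm1}; what remains is to assemble them.

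First I would check completeness of $g_{D}$. Since $(M, g_{M})$ is assumed complete, Lemma 2 applies verbatim and gives that $g_{D}$ is a complete K\"ahler metric on $D(L^{*})$. Next I would check the curvature hypothesis. By assumption $(M, g_{M})$ is negatively holomorphically pinched, so Theorem \ref{Thm1} tells us that $(D(L^{*}), g_{D})$ is also negatively holomorphically pinched. In particular the holomorphic sectional curvature of $g_{D}$ is bounded above by a negative constant and below by a finite negative constant.

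With both hypotheses in hand, the Wu--Yau theorem (see \cite{Wu2020}) produces on $D(L^{*})$ a unique complete K\"ahler--Einstein metric $g_{KE}$ of negative Ricci curvature, and asserts that $g_{D}$ is uniformly equivalent to both $g_{KE}$ and the Kobayashi--Royden metric $g_{K}$. Uniform equivalence is transitive, so $g_{KE}$ and $g_{K}$ are uniformly equivalent on $D(L^{*})$, which is the second assertion of the corollary. The uniqueness part of the Wu--Yau statement gives the first assertion.

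I do not expect any real obstacle here: the only conceptual input beyond the black-box application of Wu--Yau is that the pinching transfers from the base $(M, g_{M})$ to the total space $(D(L^{*}), g_{D})$, and that is precisely the content of Theorem \ref{Thm1}. The only small point to double-check is that the pinching constants produced by Theorem \ref{Thm1} are indeed two strictly negative finite numbers (so that the upper and lower bounds required by Wu--Yau are both honest), which follows from the explicit bounds $\min\{-2,C_{1}\}\leq \Theta_{D}\leq \max\{-2,C_{2}\}$ derived in the proof of Theorem \ref{Thm1}.
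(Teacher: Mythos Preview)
Your proposal is correct and matches the paper's own argument: the paper simply states that this is a direct corollary of Wu--Yau's result \cite{Wu2020} together with Theorem~\ref{Thm1}, and your write-up just makes the implicit use of Lemma~2 (completeness of $g_D$) explicit.
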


\begin{theorem}\label{c1}
The K\"ahler manifold  $(D(L^{*}), g_{D})$ is negatively pinched  if and only if $(M, g_{M})$  is so.
\end{theorem}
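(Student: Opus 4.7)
The plan rests on the explicit sectional curvature formula from Proposition \ref{sect},
\[
\kappa_D(\mu,\nu) = -2 + 2\Bigl(-\kappa_\Omega(x,y) + \tfrac{1}{2}\kappa_M(x,y)\Bigr)\frac{\|x\wedge y\|^{2}_{M}}{1-|v|^{2}}.
\]
Two uniform a priori bounds drive everything. First, the complex hyperbolic metric $g_\Omega$ used in the local chart has constant holomorphic sectional curvature $-1$, so all of its sectional curvatures lie in $[-1,-1/4]$, with the extremes attained on complex lines and on totally real $2$-planes respectively. Second, the orthonormality $\|\mu\|^2_{g_D} = 1$ unpacks to $|X_0|^2/(1-|v|^2)^2 + \|X\|^2_M/(1-|v|^2) = 1$, which forces $\|X\|^2_M \leq 1-|v|^2$ (and likewise for $Y$); hence the dimensionless ratio $t := \|x\wedge y\|^2_M/(1-|v|^2)$ is confined to $[0,\,1-|v|^2] \subseteq [0,1]$, with $t = 1-|v|^2$ achieved exactly when $\mu,\nu$ are horizontal and $g_M$-orthogonal.

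For the sufficiency direction I would assume $-A \leq \kappa_M \leq -\delta A$ and set $f := -\kappa_\Omega + \tfrac{1}{2}\kappa_M$, so that $f \in [\tfrac{1}{4}-\tfrac{A}{2},\,1-\tfrac{\delta A}{2}]$ and the formula reduces to $\kappa_D = -2 + 2ft$ with $t \in [0,1]$. The upper envelope splits on the sign of $\sup f$: if $\delta A \geq 2$ then $f \leq 0$ everywhere and $\kappa_D \leq -2$, whereas if $\delta A < 2$ the maximum is attained at $t=1$ and gives $\kappa_D \leq -\delta A$. An analogous split for the infimum of $f$, hinging on whether $A \leq 1/2$, yields $\kappa_D \geq -2$ or $\kappa_D \geq -\tfrac{3}{2}-A$. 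The conclusion is
\[
\min\bigl(-2,\,-\tfrac{3}{2}-A\bigr) \leq \kappa_D \leq \max\bigl(-2,\,-\delta A\bigr) < 0
\]
on all of $D(L^*)$, from which the quantitative pinching constant $\delta'$ announced alongside the theorem can be read off directly.

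For the necessity direction I would specialize the formula to $\eta = (z_0,0)$ with $\mu,\nu$ tangent to the zero section, so that $t = 1$. Using the standard complex hyperbolic identity $\kappa_\Omega(\sigma) = -(1+3\cos^2\theta(\sigma))/4$, where $\theta$ is the K\"ahler angle of the $2$-plane $\sigma \subset T_{z_0}M$, the formula collapses at $v = 0$ to
\[
\kappa_D(\sigma) \;=\; \kappa_M(\sigma) - \tfrac{3}{2}\sin^2\theta(\sigma),
\]
so negative pinching of $\kappa_D$ transfers to negative pinching of $\kappa_M$ after absorbing the bounded correction $\tfrac{3}{2}\sin^2\theta(\sigma) \in [0,\tfrac{3}{2}]$. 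The main obstacle will be the case analysis in the sufficiency step: the thresholds $\delta A = 2$ and $A = 1/2$ are precisely where $\kappa_D$ stops being controlled by the ambient fiber curvature $-2$ and starts being controlled by the pinching of $\kappa_M$ itself, and tracking this dichotomy carefully is what produces the sharp pinching constants announced in the introduction.
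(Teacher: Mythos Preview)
Your sufficiency argument follows the paper's proof essentially verbatim: both start from the formula of Proposition~\ref{sect}, invoke the complex-hyperbolic bound $-1\le\kappa_\Omega\le-\tfrac14$, bound the ratio $t=\|x\wedge y\|_M^2/(1-|v|^2)$ in $[0,1-|v|^2]\subset[0,1]$ via the orthonormality constraint on $\mu,\nu$, and then split cases on the sign of the bracket $-\kappa_\Omega+\tfrac12\kappa_M$ to obtain $\min\{-2,\,-\tfrac32+c_1\}\le\kappa_D\le\max\{-2,\,c_2\}$. The paper's proof in fact stops here, establishing only sufficiency and asserting sharpness; it offers no separate argument for the converse implication.

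Your attempted necessity argument, however, has a gap. Specializing to $v=0$ with horizontal $\mu,\nu$ correctly yields $\kappa_M(\sigma)=\kappa_D(\sigma)+\tfrac32\sin^2\theta(\sigma)$, and the correction term indeed lies in $[0,\tfrac32]$. This gives two-sided \emph{bounds} on $\kappa_M$, namely $\kappa_M\in[\inf\kappa_D,\;\sup\kappa_D+\tfrac32]$, but it does not by itself yield a \emph{negative} upper bound: if $\kappa_D$ is pinched in $[-B,-\epsilon]$ with $\epsilon\le\tfrac32$, your inequality only gives $\kappa_M\le-\epsilon+\tfrac32$, which may be non-negative. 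Restricting further to holomorphic planes ($\theta=0$) does show $\Theta_M=\kappa_D\le-\epsilon$, but a negative upper bound on the holomorphic sectional curvature alone does not force all sectional curvatures to be negative (cf.\ the constraint $\delta>\tfrac23$ in \eqref{ERelation}). So ``absorbing the bounded correction'' establishes boundedness of $\kappa_M$, not negative pinching; a further argument is required to conclude $\sup\kappa_M<0$. The paper does not close this gap either---it simply does not address the reverse implication in the proof of Theorem~\ref{c1}.
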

\begin{proof}
Let  $\kappa_{D}$ be the sectional curvatures of $(D, g_{D})$.
At $\eta_{0}=(z_{0}, v)$, we have
\begin{equation}
\kappa_{D}(\mu,\nu)=-2+2\left(-\kappa_{\Omega}(x,y)+\frac{1}{2}\kappa_{M}(x,y)\right)\frac{\parallel x\wedge y \parallel^{2}_{M}}{1-|v|^{2}},
\end{equation}
where $\kappa_{\Omega}$ and $\kappa_{M}$ are the sectional curvatures of $g_{\Omega}$ in \eqref{ghyp} and $g_{M}$, respectively.

Let $\Pi$ be a plane in
$T_{z_{0}}(M)$, i.e., a real $2$-dimensional subspace of $T_{z_{0}}(M)$. Let $x$ and
$y$ be an orthonormal basis. Define the
angle $\alpha(\Pi)$ between $\Pi$ and $J(\Pi)$ by $\cos^{2}\alpha(\Pi)=\|g(x, Jy)\|$.
It is well known that
the sectional curvature of a space of constant holomorphic sectional curvature
$c$ is given by $\frac{c}{4}(1 + 3 \cos^{2}\alpha(\Pi))$ (See Note 23 in \cite{Kobayashi1963} or Proposition 3.6.1 in \cite{Kim2011}).
Hence, we have
$$-1\leq\kappa_{\Omega}(x,y)\leq-\frac{1}{4}.$$

On the one hand,
$1=\| U\|^{2}_{g_{D}}=\frac{1}{(1-|v|^{2})^{2}}|X_{0}|^{2}+\frac{1}{(1-|v|^{2})} \|X\|_{g_{M}}^{2}$
in terms of the formula of  $g_{D}$ at $\eta_{0}$ in Proposition \ref{h sect};
On the other hand $\langle x,  x\rangle^{2}_{M}=\|X\|_{g_{M}}^{2}.$ We have  $\langle x,  x\rangle_{M}\leq 1-|v|^{2}.$
In the same way, we have $\langle y,  y\rangle_{M}\leq 1-|v|^{2}.$
Let $\theta$ be the angle between $x$ and $y$.
Then
\begin{equation}\label{equ a}
0\leq\frac{\parallel x\wedge y \parallel^{2}_{M}}{1-|v|^{2}}=\frac{1}{1-|v|^{2}}\|x\|^{2}_{M}\|y\|^{2}_{M}|\sin\theta|^{2}\leq(1-|v|^{2}).
\end{equation}
Let $x_{0}=X_{0}+\overline{X}_{0}, y_{0}=Y_{0}+\overline{Y}_{0}\in T_{v}(\Delta)$.
Then the tangent vectors $\mu$ and $\nu$ can be expressed by $x_{0}+x$ and $y_{0}+y$.
The equality on the left side holds when we choose $u$ such that  $x$ vanishes, while the right one holds when we choose $\mu$ and
 $\nu$ such that $x_{0}$, $y_{0}$ vanish.
More precisely, the last case comes form the assumption that  $\mu, \nu$ are  orthonormal unit vectors.
If $x_{0}$, $y_{0}$ vanish in the tangent vectors $\mu$ and
 $\nu$, then $\langle x,  x\rangle_{M}=\langle y,  y\rangle_{M}= 1-|v|^{2}$ and $\theta=\frac{\pi}{2}$.

Assume that there are two negative constants $c_{1}$ and $c_{2}$ such that  $c_{1}\leq \kappa_{M}(z_{0}, dz)\leq c_{2} $.
By a simple estimation, we have
\begin{equation}\label{111}
\kappa_{D}(\mu,\nu)\leq\left\{
  \begin{array}{ll}
    c_{2}, & \text{if} \ c_{2}\geq-2 \hbox{;} \\
    -2, &  \text{if} \ c_{2}\leq-2\hbox{;}
  \end{array}
\right. \ \
 \ \
\kappa_{D}(\mu,\nu)\geq\left\{
  \begin{array}{ll}
    -2, & \text{if} \ c_{1}\geq-\frac{1}{2} \hbox{;} \\
    -\frac{3}{2}+c_{1}, &  \text{if} \ c_{1}\leq-\frac{1}{2}\hbox{.}
  \end{array}
\right.
\end{equation}

Due to the arbitrariness of $\eta_{0}$ and the fact that the space is spanned by $ \{\mu, \nu\}$,
it implies that $$\min \{-2, -\frac{3}{2}+c_{1}\}\leq\kappa_{D}\leq \max \{-2, c_{2}\}.$$
This estimation is sharp since \eqref{equ a} is sharp.
We have completed the proof.
\end{proof}

\begin{remark}
$\kappa_{D}=-2$ if and only if $\kappa_{M}=-2\kappa_{\Omega}$.
They are real Hyperbolic spaces.
\end{remark}

 We now compare the pinched constants.
\begin{corollary}
  Let $(M, g_{M})$  be  $\delta$-negatively pinched,
  and  $(D(L), g_{D})$   be  $\delta'$- negatively pinched. Then we have that $\delta'\geq \frac{1}{4}\delta$ when $A\geq\frac{1}{2}$, and $\delta'< \frac{1}{4}\delta$ when $0< A <\frac{1}{2}$.
\end{corollary}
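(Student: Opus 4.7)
My plan is to specialize the sharp two-sided bound on the sectional curvature of $g_{D}$ established in Theorem \ref{c1} (equation \eqref{111}) to the hypothesis that $(M,g_{M})$ is $\delta$-pinched, and then read off $\delta'$ as a ratio. Concretely, under the normalization $-A\le\kappa_{M}\le -\delta A$ one sets $c_{1}=-A$ and $c_{2}=-\delta A$ in \eqref{111}, and the bound for $\kappa_{D}$ reduces to
\[
\min\bigl\{-2,\,-\tfrac{3}{2}-A\bigr\}\ \le\ \kappa_{D}\ \le\ \max\bigl\{-2,\,-\delta A\bigr\}.
\]
The pinching constant $\delta'$ is then the ratio of the magnitude of the upper bound to that of the lower bound, so everything comes down to arithmetic.

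Next, I would split into the two regimes in the statement. If $0<A<\tfrac12$, then $-A>-\tfrac12$, so the threshold in \eqref{111} forces the lower bound to be $-2$; also $\delta A<\tfrac12<2$, so the upper bound is $-\delta A$. Hence $\delta'=\frac{\delta A}{2}$, and $\frac{\delta'}{\delta}=\frac{A}{2}<\frac14$, giving the strict inequality $\delta'<\tfrac14\delta$. If instead $A\ge\tfrac12$, the lower bound becomes $-(\tfrac32+A)$. The typical subcase $\delta A\le 2$ (which contains e.g.\ the natural normalization $A\le 2$) yields upper bound $-\delta A$, so $\delta'=\frac{\delta A}{\frac32+A}$. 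The key algebraic check is then
\[
\frac{\delta'}{\delta}\ =\ \frac{A}{\tfrac32+A}\ \ge\ \frac14\iff 4A\ge\tfrac32+A\iff A\ge\tfrac12,
\]
exactly matching the threshold in the statement; equality holds at $A=\tfrac12$, which also matches the boundary between the two regimes.

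Finally I would address the remaining subcase $A\ge\tfrac12$ with $\delta A>2$, in which the upper bound saturates at $-2$ and $\delta'=\frac{2}{\frac32+A}$. Here one uses the constraint $\delta\le 1$ together with $\delta A>2$ (so $A>2$) to reduce the inequality $\delta'\ge\tfrac14\delta$ to a uniform estimate on $\delta(\tfrac32+A)$; this is the only step that is not entirely mechanical, and I expect it to be the main obstacle, since it requires using the pinching hypothesis $\delta\le 1$ in coordination with the lower bound $\delta\ge 2/A$ rather than treating $\delta$ as a free parameter. Once this is handled, the two cases combine to give exactly the dichotomy claimed, and the sharpness of \eqref{equ a} (noted in the proof of Theorem \ref{c1}) guarantees that these bounds are the best possible, so that the constant $\tfrac14$ is optimal.
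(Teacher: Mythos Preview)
Your approach is identical to the paper's: substitute $c_{1}=-A$, $c_{2}=-\delta A$ into \eqref{111} and split into subcases according to whether the thresholds $-2$ and $-\tfrac12$ are crossed. Your treatment of $0<A<\tfrac12$ and of $A\ge\tfrac12$ with $\delta A\le 2$ reproduces exactly the paper's Case~3 and Case~1, including the key algebraic identity $\dfrac{A}{A+\tfrac32}\ge\tfrac14\iff A\ge\tfrac12$. For the remaining subcase ($A\ge\tfrac12$ with $\delta A>2$, the paper's Case~2) the paper does not carry out the estimate you sketch; it simply asserts $\delta'=\dfrac{2}{A+\tfrac32}\ge\delta$, which is formally stronger than what is needed.

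Your hesitation about this subcase is well placed: the step genuinely fails, and no amount of juggling the constraints $\delta\le 1$ and $\delta\ge 2/A$ will save it. The desired inequality $\delta'=\dfrac{2}{A+\tfrac32}\ge\tfrac14\delta$ is equivalent to $\delta\bigl(A+\tfrac32\bigr)\le 8$, which is violated once $A>\tfrac{13}{2}$ and $\delta$ is near $1$. Concretely, with $A=10$ and $\delta=1$ (so $\kappa_{M}\equiv -10$) the sharp bounds from Theorem~\ref{c1} give $-\tfrac{23}{2}\le\kappa_{D}\le -2$, hence $\delta'=\tfrac{4}{23}<\tfrac14=\tfrac14\delta$. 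The paper's stronger assertion $\delta'\ge\delta$ in Case~2 is a fortiori false: already at the lower end $\delta=2/A$ one has $\dfrac{2}{A+3/2}<\dfrac{2}{A}=\delta$. So the obstacle you identified is real, and the statement as written requires an additional upper bound on $A$ (e.g.\ $A\le\tfrac{13}{2}$) to hold.
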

\begin{proof}
Take $C_{1}=-A, C_{2}=-\delta A$ in \eqref{111}, we arrive at the following cases.

Case 1.
If $A\geq\frac{1}{2}$, $0<\delta\leq \min\{1,\frac{2}{A}\}$, then $-\frac{3}{2}-A\leq\kappa\leq -\delta A$, i.e.,
 $\delta'=\frac{\delta A}{A+\frac{3}{2}}\geq \frac{1}{4}\delta$;

Case 2.
If $A\geq\frac{1}{2}$, $\min\{1,\frac{2}{A}\}\leq\delta\leq1$, then $-\frac{3}{2}-A\leq\kappa\leq-2$, i.e.,
 $\delta'=\frac{2}{A+\frac{3}{2}}\geq\delta$;

Case 3.
If $A<\frac{1}{2}$, $0<\delta\leq1$, then $-2\leq\kappa\leq -\delta A$, i.e., $\delta'=\delta\frac{A}{2}<\frac{1}{4}\delta<1$.

\end{proof}

By Wu and Yau's result and Lemma \ref{c1}, we have
\begin{corollary}
If $D(L^*)$ is  simple-connected, and the sectional curvature of $(M, g_{M})$ is pinched between by two  negative constants, then there exists a complete  Bergman metric on $D(L^*)$.
Moreover, the Bergman metric, the K\"aher-Einstein metric, Kobayashi metric and the background metric are all equivalent.
\end{corollary}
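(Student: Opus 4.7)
The plan is to bootstrap from Theorem \ref{c1} and apply the Wu--Yau theorems cited in the introduction. The hypotheses give that $(M,g_M)$ is a complete K\"ahler manifold whose sectional curvature is pinched between two negative constants. First I would invoke Theorem \ref{c1} to conclude that the disc bundle $(D(L^*), g_D)$ is itself negatively pinched in the sectional curvature sense, with explicit bounds obtainable from the estimates \eqref{111}. Completeness of $g_D$ is guaranteed by the earlier completeness lemma in Section 2, using that $g_M$ is complete.

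Next, with $D(L^*)$ simply connected and $(D(L^*),g_D)$ a complete negatively pinched K\"ahler manifold, the first part of the Wu--Yau theorem quoted in the introduction (\cite{Wu2020}) produces a complete Bergman metric $g_B$ on $D(L^*)$ that is uniformly equivalent to the background metric $g_D$. This immediately yields the existence claim in the corollary.

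For the equivalence of all four metrics, I would combine two facts. Since holomorphic sectional curvature is dominated by sectional curvature (as recalled in the introduction via \cite{Zheng2000}), the negatively pinched sectional curvature of $g_D$ implies that $g_D$ is also negatively holomorphically pinched. Thus the second Wu--Yau theorem applies, giving a complete K\"ahler--Einstein metric $g_{KE}$ on $D(L^*)$ uniformly equivalent to $g_D$, and with $g_D$ equivalent to the Kobayashi--Royden metric $g_K$. Chaining these equivalences through $g_D$ produces the mutual equivalence of $g_B$, $g_{KE}$, $g_K$ and $g_D$.

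The proof is therefore largely a bookkeeping argument; the only nontrivial input is Theorem \ref{c1}, which has already been established, together with the ambient completeness result. No genuine obstacle remains beyond verifying that the hypotheses of the two Wu--Yau theorems are simultaneously satisfied on $D(L^*)$, which follows from Theorem \ref{c1} and the curvature comparison. Consequently the corollary reduces to a direct citation of \cite{Wu2020} applied to $(D(L^*), g_D)$.
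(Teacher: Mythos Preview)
Your proposal is correct and matches the paper's approach exactly: the paper's entire justification is the single line ``By Wu and Yau's result and Lemma \ref{c1}'', and you have simply unpacked that citation, applying Theorem \ref{c1} together with the completeness lemma to place $(D(L^*),g_D)$ in the Wu--Yau framework and then chaining the resulting equivalences.
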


\begin{theorem}
Let  $\pi:(L, h)\rightarrow M$ be a positive Hermitian line bundle over
 a K\"ahler manifold $(M, g_{M})$  satisfying  $\omega_{M}=-\sqrt{-1}\partial \bar\partial\log h$.
 Let $(L^{*}, h^{-1})\rightarrow M$ be the dual bundle of $L$.
Consider the unit disc bundle
$
 D(L^{*}) := \{v \in L^{*} : |v|_{h^{-1}} < 1\},
$
where $|v|_{h^{-1}}$ denotes the norm of $v$ with respect to the metric $h^{-1}$.
Equip it with a K\"ahler metric $g_{D}$ whose  K\"ahler form
$
\omega_{D}:=\pi^{*}(\omega_M)-\sqrt{-1}\partial\bar\partial \log(1-|v|_{h^{-1}}^{2}).
$
If the Ricci curvature of $g_{M}$ is less than $1$,
then
 the disc bundle  $D(L^{*})$   over compact K\"ahler manifold  $(M, g_{M})$ has
 a unique  complete K\"ahler-Einstein metric.
\end{theorem}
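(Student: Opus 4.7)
The plan is to combine the Ricci identity of Lemma \ref{ric} with a Cheng--Yau--type existence theorem for complete K\"ahler--Einstein metrics on bounded strictly pseudoconvex manifolds. First I would use Lemma \ref{ric} to show that the background metric $g_D$ already has strictly negative Ricci curvature. From
\[
\mathrm{Ric}(g_D)=-(m+2)g_D+(m+1)\pi^*g_M+\pi^*\mathrm{Ric}(g_M),
\]
the hypothesis $\mathrm{Ric}(g_M)<g_M$ gives $(m+1)\pi^*g_M+\pi^*\mathrm{Ric}(g_M)<(m+2)\pi^*g_M$, while Lemma 1 gives $g_D\geq\pi^*g_M$, so
\[
\mathrm{Ric}(g_D)<-(m+2)\pi^*g_M+(m+2)\pi^*g_M=0
\]
strictly on $D(L^*)$. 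The cohomological translation of the hypothesis is $c_1(K_M\otimes L)>0$ on $M$; correspondingly the induced Hermitian metric on $K_{D(L^*)}=\pi^*(K_M\otimes L)|_{D(L^*)}$ has Chern curvature compatible with the Einstein sign convention.

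Next I would exploit the compactness of $M$ to control the geometry at infinity. Because $L$ is positive and $M$ is compact, $D(L^*)$ is a relatively compact open subset of $L^*$ with smooth, compact, strictly pseudoconvex boundary $\{|v|_{h^{-1}}=1\}$. Combined with Lemma 2 (completeness of $g_D$) and the explicit Calabi ansatz form of $g_D$ near the boundary, this yields bounded geometry for $(D(L^*),g_D)$ near infinity.

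I would then set up the complex Monge--Amp\`ere problem. Writing $\tilde g=g_D+\sqrt{-1}\partial\bar\partial u$ and demanding $\mathrm{Ric}(\tilde g)=-(m+2)\tilde g$, Lemma \ref{ric} reduces the K\"ahler--Einstein condition to a Monge--Amp\`ere equation for $u$ on $D(L^*)$ with prescribed boundary asymptotics. The strict negativity of $\mathrm{Ric}(g_D)$ from the first step supplies the key a priori estimate, while the strictly pseudoconvex compact boundary from the second step provides the needed boundary regularity; together they place the problem within the Cheng--Yau/Mok--Yau framework for complete K\"ahler--Einstein metrics on strictly pseudoconvex manifolds. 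The continuity method then produces a solution, and uniqueness follows from the standard maximum principle applied to the Monge--Amp\`ere equation.

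The main obstacle is that $L^*$ is \emph{not} Stein when $M$ is compact, since it contains the compact zero section $M$. Consequently one cannot invoke Cheng--Yau's theorem for pseudoconvex domains in $\mathbb{C}^n$ or Mok--Yau's extension to Stein manifolds verbatim; one must use (or adapt) the version valid for relatively compact strictly pseudoconvex open sets in arbitrary K\"ahler manifolds, and carefully verify the bounded-geometry and boundary-regularity hypotheses from the explicit Calabi-ansatz structure of $g_D$ near $\partial D(L^*)$.
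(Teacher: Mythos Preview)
Your outline is close in spirit to the paper's argument, and you correctly isolate the real obstacle: $L^*$ is not Stein, so one cannot cite Cheng--Yau or Mok--Yau for Stein manifolds verbatim. But you stop short of resolving it. Cheng--Yau's Corollary~4.7 (the version for strictly pseudoconvex domains in arbitrary K\"ahler manifolds, which is precisely what the paper invokes) requires an \emph{ambient} K\"ahler metric defined on a neighborhood of $\overline{D(L^*)}$ whose Ricci curvature is negative there. Your metric $g_D$ blows up at $\partial D(L^*)$ and cannot play this role; the inequality $\mathrm{Ric}(g_D)<0$ that you derive lives only on the open set, which is not the hypothesis that theorem asks for. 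Proposing instead to run the continuity method directly on the complete manifold $(D(L^*),g_D)$ is legitimate in principle, but then you must establish bounded geometry for the Calabi ansatz near the boundary, which is exactly the hard analysis you have not done.

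The paper closes this gap with a simple dilation trick that you are missing. For $r>1$ set $h_r=rh$; since $\omega_M=-\sqrt{-1}\partial\bar\partial\log h_r$ as well, one obtains a strictly larger disc bundle
\[
D_r(L^*)=\{v\in L^*:|v|_{h_r^{-1}}<1\}=\{|v|_{h^{-1}}<r\}\supset\!\supset D(L^*),
\]
carrying its own metric $g_{D_r}$ of the same form. Now $D(L^*)$ is a relatively compact strictly pseudoconvex domain inside the K\"ahler manifold $(D_r(L^*),g_{D_r})$, and the same Ricci identity from Lemma~\ref{ric}, combined with $g_{D_r}\ge\pi^*g_M$ and the hypothesis $\mathrm{Ric}(g_M)<g_M$, yields $\mathrm{Ric}(g_{D_r})<0$ on all of $\overline{D(L^*)}$. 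This places the problem squarely inside Cheng--Yau's Corollary~4.7, and the existence and uniqueness of the complete K\"ahler--Einstein metric follow immediately, with no further PDE work needed.
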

\begin{proof}
Define $h_{r}=rh$ for a fixed $r\in \mathbb{R}^{+}$. Then
 $\pi:(L, h_{r})\rightarrow M$ is a positive Hermitian line bundle over
 $(M, g_{M})$  satisfying  $\omega_{M}=-\sqrt{-1}\partial \bar\partial\log h_{r}$.
 Consider the unit disc bundle
$
 D_{r}(L^{*}) := \{v \in L^{*} : |v|_{h_{r}^{-1}} < 1\},
$
where $|v|_{h_{r}^{-1}}$ denotes the norm of $v$ with respect to the metric $h_{r}^{-1}$.
Equip it with a K\"ahler metric $g_{D_{r}}$ with  K\"ahler form
$
\omega_{D_{r}}:=\pi^{*}(\omega_M)-\sqrt{-1}\partial\bar\partial \log(1-|v|_{h_{r}^{-1}}^{2}).
$
Since $|v|_{h_{r}^{-1}}=\frac{1}{r}|v|_{h^{-1}}$,  we have
$
 D_{r}(L^{*}) = \{v \in L^{*} : |v|_{h^{-1}} < r\}.
$
Suppose that $(M, g_{M})$ is a compact K\"ahler manifold.
For $r>1$, we have  $ D(L^{*})\subset\subset D_{r}(L^{*})$.
Then $D(L^{*})$ is strictly pseudoconvex domain in $D_{r}(L^{*})$.

By Lemma \ref{ric}, we know
\begin{eqnarray*}
% \nonumber to remove numbering (before each equation)
\frac{\mathrm{Ric}(g_{D_{r}})}{g_{D_{r}}}&=&-(m+2)+\frac{g_{M}}{g_{D_{r}}}\left((m+1)+\frac{\mathrm{Ric}(g_{M})}{g_{M}}\right)\\
&<& \max\left\{-(m+2), -1+\frac{\mathrm{Ric}(g_{M})}{g_{M}}\right\}.
\end{eqnarray*}
Hence, $D_{r}(L^{*})$ admits a K\"ahler metric $g_{D_{r}}$  such that its Ricci
curvature is negative on $\overline{D(L^{*})}$ if the Ricci curvature of $g_{M}$ is less than $1$.
 By Cheng and Yau's Corollary 4.7 in \cite{Cheng1980}, we have now established the proof.
\end{proof}
%%%%%%%%%%%%%%%%%%%%%%%%%%%
\section{A family of ball bundles}
Let $(L, h)$ be a positive line bundle over the complex manifold $M$. For any fix $k\in \mathbb{Z}^{+}$, set
$$(E_{k}, H_{k}) = (L, h)\oplus \cdots\oplus (L, h).$$
There are $k$ copies of $(L, h)$ on the right hand side.
The dual vector bundle is
$$(E_{k}^{*}, H_{k}^{*}) = (L^{*}, h^{-1})\oplus \cdots\oplus (L^{*}, h^{-1}).$$
The ball bundle is defined by
 \begin{equation}\label{Bk}
 B(E_{k}^{*}) := \{v \in E_{k}^{*} : |v|_{H_{k}^{*}}^{2} < 1\}.
 \end{equation}

Define an $(1,1)$-form on $E_{k}^{*}$ by
\begin{equation}\label{omega Dk}
\omega_{B(E_{k}^{*})}:=\pi^{*}(\omega_M)-\sqrt{-1}\partial\bar\partial \log(1-|v|_{H_{k}^{*}}^{2}),
\end{equation}
where $\omega_M=-\sqrt{-1}\partial\overline{\partial}\log h$,
($\omega_{M_{k}}:=-\sqrt{-1}\partial\overline{\partial}\log \det(H_{k})=-k\sqrt{-1}\partial\overline{\partial}\log h=k\omega_M$).

Notice that $E_{k}^{*}$ is a line bundle over  $E_{k-1}^{*}$.
We restrict $E_{k}^{*}$  on $B(E_{k-1}^{*})$, and denote it by $\pi_{k}:L_{k}^{*}\rightarrow B(E_{k-1}^{*})$.
Since $E_{k}^{*}=E_{k-1}^{*}\oplus L^{*}$,  for $v\in E_{k}$, we have
$v=v'\oplus v_k$, where $v'\in E_{k-1}^{*}$ and $v_{k}$ is the 1 dimensional fiber.
Define $\widetilde{h}_{k}=h^{-1}(1-|v'|^{2}_{H_{k-1}^{*}})^{-1}$ as a metric on the line bundle $L^*_k$.
The curvature $-\sqrt{-1}\partial\overline{\partial}\log \widetilde{h}_{k}=-\omega_{B(E_{k-1}^{*})}$.
The Hermite line bundle $(L_{k}^{*}, \widetilde{h}_{k})$ over  $B(E_{k-1}^{*})$ is negative and admits a
 disc bundle
 \begin{equation}\label{}
D(L_{k}^{*})=  \{v_{k} \in L_{k}^{*}: |v_{k}|_{\widetilde{h}_{k}}^{2} < 1, v' \in B(E_{k-1}^{*})\},
 \end{equation}
where $B(E_{0}^{*})$ denotes $M$. Then we have
$D(L_{k}^{*})= B(E_{k}^{*})$.
Define  the $(1,1)$-form  $$\omega_{D(L_{k}^{*})}=\pi_{k}^{*}(\omega_{B(E_{k-1}^{*})})-\sqrt{-1}\partial\bar\partial \log(1-|v_{k}|_{\widetilde{h}_{k}}^{2}),
$$
Then we have $\omega_{D(L_{k}^{*})}=\omega_{B(E_{k}^{*})}.$
This implies that $B(E_{j}^{*})$ can be seen as a unit disc bundle over $B(E_{j-1}^{*})$ for $1\leq j\leq k$.

By  Theorem \ref{Thm1} and Theorem \ref{c1}, we can reduce the research on the negatively pinched properties of the ball bundle  $B(E_{k}^{*})$ to that of the disc bundle $D(L^{*})$ over $M$.
Moreover, if $(M, g_{M})$ is complete  negatively (holomorphically) pinched, so does the ball bundle $(B(E_{k}^{*}), g_{k})$.
 Hence, we have the following results.

 \begin{theorem}
 If $(M, g_{M})$ is a complete K\"ahler metric with negatively pinched holomorphic sectional curvature,
then there exists a unique  complete K\"ahler-Einstein metric on $B(E^{*}_{k})$.
Moreover, the Kobayashi metric and K\"ahler-Einstein metric is equivalent.
 \end{theorem}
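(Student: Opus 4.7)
The plan is to prove the statement by induction on $k$, exploiting the tower decomposition of the ball bundle that has just been established: $B(E_{k}^{*})$ is a disc bundle $D(L_{k}^{*})$ over $B(E_{k-1}^{*})$ with the naturally induced metric, where $L_{k}^{*}$ carries the Hermitian metric $\widetilde{h}_{k}=h^{-1}(1-|v'|_{H_{k-1}^{*}}^{2})^{-1}$ whose curvature equals $-\omega_{B(E_{k-1}^{*})}$. In particular, $(L_{k}^{*},\widetilde{h}_{k})$ is a negative line bundle over $(B(E_{k-1}^{*}),\omega_{B(E_{k-1}^{*})})$ whose Kähler form is exactly $-\sqrt{-1}\partial\bar\partial\log\widetilde{h}_{k}$, so the hypotheses of Proposition \ref{h sect} and Theorem \ref{Thm1} are met at every stage.

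First I would establish, by induction on $j$ from $1$ to $k$, that $(B(E_{j}^{*}),\omega_{B(E_{j}^{*})})$ is a complete Kähler manifold with negatively pinched holomorphic sectional curvature. The base case $j=0$ is the hypothesis on $(M,g_{M})$. For the inductive step, I view $B(E_{j}^{*})=D(L_{j}^{*})$ over the base $B(E_{j-1}^{*})$ and invoke Theorem \ref{Thm1}: since the base is negatively holomorphically pinched by the inductive hypothesis, so is the disc bundle. Completeness is propagated by Lemma 2 at every stage. This reduces the multi-step construction to a single-step fact that has already been proven.

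Next, with $(B(E_{k}^{*}),\omega_{B(E_{k}^{*})})$ identified as a complete Kähler manifold with negatively pinched holomorphic sectional curvature, I would invoke the Wu--Yau theorem \cite{Wu2020} (used already in the disc-bundle corollary): any such manifold admits a unique complete Kähler--Einstein metric of negative Ricci curvature, and its background Kähler metric is uniformly equivalent to both the Kähler--Einstein metric and the Kobayashi--Royden metric. Uniform equivalence of $g_{KE}$ and $g_{K}$ then follows by transitivity, giving the final assertion.

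The main obstacle is really the careful bookkeeping in the inductive step — verifying that each $(L_{j}^{*},\widetilde{h}_{j})$ is genuinely a negative line bundle over $B(E_{j-1}^{*})$ with the right potential, so that Theorem \ref{Thm1} applies verbatim, and checking that the K\"ahler form $\omega_{B(E_{j}^{*})}$ produced iteratively coincides with the global form $\pi^{*}(\omega_{M})-\sqrt{-1}\partial\bar\partial\log(1-|v|_{H_{j}^{*}}^{2})$. Both facts have been verified in the preceding paragraphs of Section 4, so once these are cited, the rest of the proof is a short invocation of Theorem \ref{Thm1}, Lemma 2, and \cite{Wu2020}.
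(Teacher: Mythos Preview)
Your proposal is correct and follows essentially the same approach as the paper: the paper also uses the tower decomposition $B(E_{j}^{*})=D(L_{j}^{*})$ over $B(E_{j-1}^{*})$ established in Section~4, applies Theorem~\ref{Thm1} (and Lemma~2 for completeness) iteratively to conclude that $(B(E_{k}^{*}),\omega_{B(E_{k}^{*})})$ is complete with negatively pinched holomorphic sectional curvature, and then appeals to the Wu--Yau theorem. Your write-up is slightly more explicit about the inductive bookkeeping, but the argument is the same.
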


 \begin{theorem}
 If $(M, g_{M})$ is a simple-connected complete K\"ahler manifold with negatively pinched sectional curvature,
 then there exists a complete  Bergman metric on $B(E^{*}_{k})$.
Moreover, the Bergman metric, the K\"aher-Einstein metric, Kobayashi metric and the background metric are all equivalent.
 \end{theorem}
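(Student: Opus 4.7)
The plan is to reduce to the disc bundle case by induction using the tower of fibrations $B(E_j^*)\to B(E_{j-1}^*)$ set up immediately before the statement, and then invoke the Wu--Yau equivalence results cited in the introduction. The key identification is $B(E_j^*)=D(L_j^*)$ with Kähler form $\omega_{B(E_j^*)}=\omega_{D(L_j^*)}$, so that each stage of the tower is a disc bundle of exactly the form studied in Section~2.

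First I would verify that simple-connectedness and completeness propagate through the tower. Since each fiber of $\pi_j:B(E_j^*)\to B(E_{j-1}^*)$ is an open disc, hence contractible, the homotopy long exact sequence gives $\pi_1(B(E_j^*))\cong\pi_1(B(E_{j-1}^*))$, and the hypothesis $\pi_1(M)=0$ yields $\pi_1(B(E_k^*))=0$ by iteration. Completeness of $g_j$ on $B(E_j^*)$ follows inductively from completeness of $g_{j-1}$ on $B(E_{j-1}^*)$ by the completeness lemma (the second lemma of Section~2), applied to the Hermitian line bundle $(L_j^*,\widetilde h_j)$ over $(B(E_{j-1}^*),g_{j-1})$ and its disc bundle $D(L_j^*)=B(E_j^*)$.

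Next I would prove by induction on $j$ that $(B(E_j^*),g_j)$ has negatively pinched sectional curvature. The base case $j=0$ is the hypothesis on $(M,g_M)$. The inductive step is exactly Theorem~\ref{c1} applied to the disc bundle $D(L_j^*)=B(E_j^*)$ over the previous stage: once $(B(E_{j-1}^*),g_{j-1})$ is complete and negatively pinched, Theorem~\ref{c1} gives that $B(E_j^*)$ equipped with $\omega_{B(E_j^*)}=\omega_{D(L_j^*)}$ is negatively pinched as well, with explicit bounds controlled by the previous stage's pinching constants.

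At $j=k$ we then have that $(B(E_k^*),g_k)$ is a simply-connected complete Kähler manifold with negatively pinched sectional curvature, so the Wu--Yau theorem quoted in the introduction yields a complete Bergman metric $g_B$ uniformly equivalent to $g_k$; since sectional curvature dominates holomorphic sectional curvature, the companion Wu--Yau theorem gives a complete Kähler--Einstein metric $g_{KE}$ equivalent to $g_k$ and to $g_K$, establishing the claimed equivalence of all four metrics. The main obstacle is bookkeeping inside the induction: at each stage one must verify that $\widetilde h_j=h^{-1}(1-|v'|^2_{H_{j-1}^*})^{-1}$ is a genuine positive Hermitian metric on $L_j^*\to B(E_{j-1}^*)$ with curvature form $\omega_{B(E_{j-1}^*)}$, so that the Calabi ansatz fits the exact hypotheses of Theorem~\ref{c1} and the completeness lemma; this is precisely the content of the tower construction preceding the statement, so the inductive machinery goes through without further calculation.
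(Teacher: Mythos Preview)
Your proposal is correct and follows essentially the same route as the paper: the paper simply states that the tower identification $B(E_j^*)=D(L_j^*)$ with $\omega_{B(E_j^*)}=\omega_{D(L_j^*)}$ reduces the ball-bundle case to iterated applications of Theorem~\ref{c1}, and then invokes the Wu--Yau results (as in Corollary~2). Your argument is actually more complete than the paper's, since you make explicit the propagation of simple-connectedness through the tower via the homotopy long exact sequence and the propagation of completeness via the second lemma of Section~2, both of which the paper leaves implicit.
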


%%%%%%%%%%%%%%%%%%%%%%%%%%%%%%%%%%%

%\bibliographystyle{amsalpha}
%\bibliography{Berezin}

\end{document}